\newcommand{\into}{\hookrightarrow}
\newcommand{\abs}[1]{\left\lvert#1\right\rvert}
\newcommand{\Z}{\ensuremath{\mathbb{Z}}}
\newcommand{\R}{\ensuremath{\mathbb{R}}}
\newcommand{\C}{\ensuremath{\mathbb{C}}}
\newcommand{\M}{\mathcal{M}}
\newcommand{\Mbar}{\overline{\mathcal{M}}}
\newcommand{\Hbar}{\overline{\mathcal{H}}}
\renewcommand{\P}{\ensuremath{\mathbb{P}}}
\newcommand{\Hcomb}{\mathcal{H}_{\mathrm{prof}}}
\newcommand{\Hbarcomb}{\overline{\mathcal{H}}_{\mathrm{prof}}}
\newcommand{\Htropcomb}{\mathcal{H}_{\mathrm{prof}}^{\trop}}
\newcommand{\Htrop}{\mathcal{H}^{\trop}}
\newcommand{\Htilde}{\widetilde{\mathcal{H}}}
\newcommand{\Htildecomb}{\widetilde{\mathcal{H}}_{\mathrm{prof}}}
\DeclareMathOperator{\lcm}{lcm}
\DeclareMathOperator{\val}{val}
\DeclareMathOperator{\MSC}{MSC}
\DeclareMathOperator{\len}{len}
\DeclareMathOperator{\stratum}{X}
\DeclareMathOperator{\cone}{Cone}
\newcommand{\margincolor}{red}      
\definecolor{darkgreen}{rgb}{0,0.7,0}
\newcounter{margincounter}
\newcommand{\marginnum}{
\ifnum\value{margincounter}<10
\textcolor{\margincolor}{\begin{picture}(0,0)\put(2.2,2.4){\circle{9}}\end{picture}\footnotesize\arabic{margincounter}}
\else\ifnum\value{margincounter}<100
\textcolor{\margincolor}{\begin{picture}(0,0)\put(4.256,2.5){\circle{11}}\end{picture}\footnotesize\arabic{margincounter}}
\else
\textcolor{\margincolor}{\begin{picture}(0,0)\put(6.8,2.5){\circle{14}}\end{picture}\footnotesize\arabic{margincounter}}
\fi\fi
}
\newcommand{\bP}{\mathbf{P}}
\newcommand{\CC}{\mathbb{C}}
\newcommand{\PP}{\mathbb{P}}
\newcommand{\Qbar}{\overline{\mathbb{Q}}}
\newcommand{\cH}{\mathcal{H}}
\newcommand{\cM}{\mathcal{M}}
\newcommand{\cT}{\mathcal{T}}
\newcommand{\bB}{\mathbf{B}}
\newcommand{\st}{\thickspace|\thickspace}
\theoremstyle{plain}
\newtheorem{theorem}{Theorem}
\numberwithin{theorem}{section}
\newtheorem*{thm*}{Theorem}
\newtheorem{prop}[theorem]{Proposition}
\newtheorem{cor}[theorem]{Corollary}
\newtheorem{lem}[theorem]{Lemma}
\theoremstyle{definition}
\newtheorem{Def}[theorem]{Definition}
\theoremstyle{remark}
\newtheorem{rem}[theorem]{Remark}
\DeclareMathOperator{\trop}{trop}
\DeclareMathOperator{\edges}{Edges}
\DeclareMathOperator{\TP}{TP}
\DeclareMathOperator{\TLT}{TLT}
\DeclareMathOperator{\TLTtilde}{\widetilde{TLT}}
\DeclareMathOperator{\Tbar}{\overline{\mathcal{T}}}
\DeclareMathOperator{\lambdathurst}{\lambda_{\mathrm{Thurst}}}
\DeclareMathOperator{\lambdascal}{\lambda_{\mathrm{scal}}}
\DeclareMathOperator{\lambdadom}{\lambda_{\mathrm{dom}}}
\DeclareMathOperator{\CCom}{CC}
\DeclareMathOperator{\ConeCC}{ConeCC}
\DeclareMathOperator{\stab}{St}
\DeclareMathOperator{\edgedeg}{EdgeDeg}
\DeclareMathOperator{\legdeg}{LegDeg}
\DeclareMathOperator{\slope}{Slope}
\DeclareMathOperator{\Mod}{PMod}
\DeclareMathOperator{\CP}{\mathbb{C}\mathbb{P}^1}
\DeclareMathOperator{\tree}{\tau}
\DeclareMathOperator{\lcmdeg}{LcmDeg}
\begin{document}

\title{Thurston obstructions and tropical geometry}
\author{Rohini Ramadas}
\email{rohini.ramadas@warwick.ac.uk}
\address{Warwick Mathematics Institute, University of Warwick, Coventry, UK}
\subjclass[2020]{37F34, 37F20, 14T99}
\date{\today}

\begin{abstract}
We describe an application of tropical moduli spaces to complex dynamics. A post-critically finite branched covering $\varphi$ of $S^2$ induces a pullback map on the Teichm\"uller space of complex structures of $S^2$; this descends to an algebraic correspondence on the moduli space of point-configurations of $\P^1$. We make a case for studying the action of the tropical moduli space correspondence by making explicit the connections between objects that have come up in one guise in tropical geometry and in another guise in complex dynamics. For example, a Thurston obstruction for $\varphi$ corresponds to a ray that is fixed by the tropical moduli space correspondence, and scaled by a factor $\ge 1$. This article is intended to be accessible to algebraic and tropical geometers as well as to complex dynamicists. 
\end{abstract}
\maketitle

\section{Introduction}

Topologically speaking, every rational function $f:\CP\to\CP$ is an orientation-preserving branched covering of the 2-sphere. In 1982, Thurston addressed the question: which branched coverings are rational functions? In order for this to be a well-posed question, we restrict to the case of branched coverings that are \textit{post-critically finite}, i.e. whose postcritical set is finite. Given an orientation-preserving branched cover $\varphi:S^2\to S^2$ with finite post-critical set $\bP$, Thurston showed that ``pullback of complex structures along $\varphi$" is a well-defined holomorphic self-map $\TP_{\varphi}$ of the Teichm\"uller space $\cT(S^2,\bP)$ of complex structures on $S^2\setminus \bP$. Essentially tautologically, a fixed point of $\TP_{\varphi}$ is an identification $S^2\cong \CC\PP^1$ under which $\varphi$ is isotopic, relative to $\bP$, to a PCF rational function on $\CC\PP^1$. Furthermore, $\TP_{\varphi}$ is distance-nonincreasing with respect to the Teichm\"uller metric on $\cT(S^2,\bP)$, and, under a mild additional hypothesis\footnote{\label{fn:hyperbolicorbifold}The hypothesis is that $\varphi$ has \textit{hyperbolic orbifold}, see \cite{DouadyHubbard1993} for a definition and complete classification of excluded $\varphi$. The only examples that are excluded are certain examples with four or fewer post-critical points.} on $\varphi$, $\TP_{\varphi}^2$ is strictly distance-decreasing. In the latter case, to which we restrict in this article, either (A) $\TP_{\varphi}$ has fixed point, which is necessarily unique, or (B) the branched covering $\varphi$ carries a topological object called a \textit{Thurston obstruction}. (This proof was written up by Douady and Hubbard in \cite{DouadyHubbard1993}.) 

A Thurston obstruction is a multicurve --- a collection of pairwise-disjoint simple closed curves on $S^2\setminus \bP$ --- that has a type of invariance called \textit{$\varphi$-stability}, and that satisfies another condition having to do with the spectral properties of its \textit{Thurston linear transformation} (defined in Section \ref{sec:multicurves}). In fact, Pilgrim \cite{pilgrim2001canonical} showed that if $\varphi$ is not isotopic to a PCF rational function, then it has a \textit{canonical} Thurston obstruction admitting an informal interpretation as follows: for all $x\in\cT(S^2,\bP)$, as $r\to\infty$, $\TP_{\varphi}^{r}(x)$ ``goes to infinity", i.e. leaves every compact subset of $\cT(S^2,\bP)$. As $\TP_{\varphi}^{r}(x)$ goes to infinity, the complex structure on the sphere degenerates. As the complex structure degenerates, there is at least one simple closed geodesic curve whose length goes to zero. The set of all such curves is the canonical obstruction. 

Thurston's theorem (as well as the theory developed in its proof) is still central in modern $1$-variable complex and arithmetic dynamics. For example, one consequence of the theorem is that (with exceptions\footnote{The exceptions are \textit{flexible Latt\`es maps}: A flexible Latt\`es map is a rational map on $\CC\PP^1$ that is descended from an affine map on an elliptic curve $E$, via the hyperelliptic map $E\to \CC\PP^1$. Flexible Latt\`es maps have square degree, and are PCF with exactly four post-critical points.}) PCF maps are ``rigid", i.e. do not deform in non-isotrivial families, and are defined over $\Qbar$. Teichm\"uller space $\cT(S^2,\bP)$ is a nonalgebraic complex manifold; it is the universal cover of the moduli space $\cM_{0,P}$, an algebraic variety parameterizing point-configurations on $\CC\PP^1$. In 2013, Koch \cite{Koch2013} showed that $\TP_{\varphi}$ descends to an algebraic multivalued self-map $\MSC_{\varphi}$ of $\cM_{0,\bP}$ known as the \textit{moduli space correspondence}. Fixed points of $\MSC_{\varphi}$ are in bijection with PCF rational functions that are \textit{Hurwitz equivalent} to $\varphi$ (Defintion \ref{def:hurwitzeq}), but that are not necessarily isotopic to $\varphi$. 

There is a \textit{moduli space of tropical curves} $\M_{0,\bP}^{\trop}$, which is a polyhedral object obtained by gluing convex cones together along faces (\cite{Mikhalkin2006, BilleraHolmesVogtmann}, see Section \ref{sec:tropicalcurve}). Very informally speaking, $\M_{0,\bP}^{\trop}$ can be thought of as a space parameterizing ``paths to infinity" in $\M_{0,\bP}$ \cite{AbramovichCaporasoPayne2015}. It follows directly from the existence and properties of the \textit{tropical Hurwitz space} constructed in \cite{CavalieriMarkwigRanganathan2016} that the moduli space correspondence induces a \textit{tropical moduli space correspondence} $\MSC_{\varphi}^{\trop}$, a piecewise linear multivalued action on $\M_{0,\bP}^{\trop}$. 

In this article, we synthesize concepts well-understood in the field of rational dynamics and Teichm\"uller theory with concepts well-understood in the field of tropical curves and their moduli spaces. We believe that the resulting synthesis results in conceptual clarity and valuable perspective. We first (Section \ref{sec:tropicalteichmuller}) put forward a candidate for the tropicalization of Teichm\"uller space. There is currently no precise definition of what the term tropicalization might mean as applied to Teichm\"uller space, which is not an algebraic variety --- in Section \ref{sec:nonalgebraictropical} we discuss the various meanings of the term tropicalization, and justify our candidate for tropical Teichm\"uller space. In Section \ref{sec:tropicalpullback} we interpret the collection of Thurston linear transformations as together comprising a \textit{tropical pullback map} $\TP^{\trop}$. We then show:

\begin{theorem}[Informal summary of Section \ref{sec:tropicalcorrespondence}]\label{thm:main}\hfill
    \begin{enumerate}
        \item (Proposition \ref{prop:tropicalcorrespondencefrompullback} and Diagram \ref{diag:tropical}.) $\MSC^{\trop}_{\varphi}$ descends from $\TP^{\trop}_{\varphi}$. 
        \item (Proposition \ref{prop:localtropicalcorrespondence}.) Every locally-defined single-valued linear branch of $\MSC_{\varphi}^{\trop}$ has the same matrix as some Thurston linear transformation associated to $\varphi$.
        \item (Proposition \ref{prop:weaklyfixedray} and Corollary \ref{cor:obstructionsandrays}.) $\MSC_{\varphi}^{\trop}$-fixed rays correspond to non-empty multicurves on $S^2$ that are $\varphi'$-stable for some $\varphi'$ Hurwitz equivalent to $\varphi$. $\MSC_{\varphi}^{\trop}$-fixed rays that are scaled by a factor $\ge 1$ correspond to multicurves that are Thurston obstructions for some $\varphi'$ Hurwitz equivalent to $\varphi$. \label{it:fixedray}
    \end{enumerate}
\end{theorem}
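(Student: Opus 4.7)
I would prove the three items in order, using the descent from tropical pullback to moduli-space correspondence as the organizing principle, and then reading off the local structure and fixed-ray analysis from the combinatorics of cones in $\M_{0,\bP}^{\trop}$.

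\textbf{Part (1).} By analogy with the algebraic covering $\cT(S^2,\bP) \to \cM_{0,\bP}$ that intertwines $\TP_{\varphi}$ and $\MSC_{\varphi}$, I would first realize the domain of $\TP^{\trop}_{\varphi}$ as a space carrying a natural $\Mod(S^2,\bP)$-action whose quotient is $\M_{0,\bP}^{\trop}$. The map $\TP^{\trop}_{\varphi}$ is equivariant only under the stabilizer of $\varphi$ up to isotopy, so on the quotient it becomes multivalued with branches indexed by a coset space. The key compatibility step is verifying that this descended correspondence coincides with the tropicalization of Koch's algebraic $\MSC_{\varphi}$; this should follow by identifying both with a correspondence on the tropical Hurwitz space of \cite{CavalieriMarkwigRanganathan2016}, then composing with the forgetful map to $\M_{0,\bP}^{\trop}$.

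\textbf{Parts (2) and (3).} For part (2), let $\sigma_{\Gamma}$ be the open cone in $\M_{0,\bP}^{\trop}$ parametrizing metric trees of a combinatorial type indexed by a multicurve $\Gamma$. The coordinates on $\sigma_{\Gamma}$ are edge lengths, corresponding exactly to the formal basis on which the Thurston linear transformation of $\Gamma$ acts. A single-valued linear branch of $\MSC_{\varphi}^{\trop}$ over $\sigma_{\Gamma}$ selects, for each $\gamma \in \Gamma$, a connected component of $\varphi^{-1}(\gamma)$ together with the degree of $\varphi$ on it; unwinding the definition of the Thurston linear transformation in Section~\ref{sec:multicurves} then matches the matrix entries precisely. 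For part (3), a fixed ray in $\sigma_{\Gamma}$ scaled by $\lambda$ is an eigenvector of some Thurston linear transformation $T$ with eigenvalue $\lambda$; existence of such an eigenvector in the positive cone is a Perron--Frobenius phenomenon and is equivalent to the multicurve $\Gamma$ being $\varphi'$-stable for some $\varphi'$ Hurwitz equivalent to $\varphi$, while the condition $\lambda \geq 1$ is exactly the spectral inequality defining a Thurston obstruction.

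\textbf{Main obstacle.} The substantial work is in part (1): carefully constructing and matching the ``tropical Teichm\"uller'' domain of $\TP^{\trop}_{\varphi}$ with a covering of $\M_{0,\bP}^{\trop}$, and verifying that the descended correspondence coincides with the tropicalization of $\MSC_{\varphi}$. Once this is in place, parts (2) and (3) reduce to organizing the combinatorial data associated to multicurves, reading off the local linear structure of the correspondence, and invoking a standard Perron--Frobenius argument for the fixed-ray analysis.
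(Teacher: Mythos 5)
Your proposal follows essentially the same architecture as the paper: descend the tropical pullback map through the tropical Hurwitz space of \cite{CavalieriMarkwigRanganathan2016} to get the tropical correspondence, read the local linear branches off the cone combinatorics, and feed the fixed-ray statements through Perron--Frobenius. In that sense the approach is right. However, two points need sharpening before this is a proof.

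For Part (2), a locally-defined branch of $\MSC_{\varphi}^{\trop}$ does not arise from selecting a single connected component of $\varphi^{-1}(\gamma)$ for each $\gamma\in\Gamma$. A branch over a cone $\cone(\tree_1)$ of $\M_{0,\bP}^{\trop}$ is determined by a choice of cone $\cone(\Lambda)$ of $\Htrop(\varphi)$ lying over it, where $\Lambda=(\tree_1,\tree_2,F,\edgedeg,\ldots)$ is a \emph{full} combinatorial type of admissible cover; all preimage components of each $\gamma$ appear in $\tree_2$ with their degrees, and the Thurston linear transformation is recovered by summing over them. The hard work in the paper's version of this step is establishing (via the explicit map $\nu_\varphi^{\trop}$ from weighted multicurves to tropical admissible covers, and Lemma~\ref{lem:combinatorialtypefrommulticurve}) that every such $\Lambda$ that has topological type $\varphi$ actually arises as $\Lambda_{\varphi}(\Gamma)$ for some multicurve $\Gamma$, so that the matrix comparison is between objects on an equal footing.

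For Part (3), the sentence ``existence of such an eigenvector in the positive cone is a Perron--Frobenius phenomenon and is equivalent to the multicurve $\Gamma$ being $\varphi'$-stable for some $\varphi'$ Hurwitz equivalent to $\varphi$'' conflates two separate things and skips the one genuinely nontrivial construction. Perron--Frobenius only tells you that once you have a weakly fixed cone (so a non-negative linear self-map of $\cone(\tree_1)$), a fixed ray with the dominant eigenvalue exists; it says nothing about the multicurve being stable. The real content is the converse direction of Proposition~\ref{prop:weaklyfixedcone}: given a weakly fixed cone $\cone(\Lambda)$ with $\Lambda=\Lambda_{\varphi}(\Gamma)$, one only knows $\tree(\varphi^{*}(\Gamma))=\tree(\Gamma)$, not $\varphi^{*}(\Gamma)\subset\Gamma$. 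To pass from equality of marked dual trees to actual stability one must produce a mapping class $[\alpha]\in\Mod(S^2,\bP)$ with $\alpha_{*}(\Gamma)=\varphi^{*}(\Gamma)$ and replace $\varphi$ by $\varphi'=\varphi\circ\alpha$; this is exactly why ``some $\varphi'$ Hurwitz equivalent to $\varphi$'' appears in the statement. Your write-up treats that substitution as automatic, but it is the step you would need to supply.
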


Theorem \ref{thm:main} provides evidence that the tropical moduli space correspondence captures important aspects of the dynamics near infinity of the algebraic moduli space correspondence. It is therefore of interest to study the global dynamics of the tropical correspondence. However, the technical tools needed to undertake this study are currently under-developed. We explain some of these difficulties in Remarks \ref{rem:truetropicalcorrespondence} and \ref{rem:notdefinedonlink}. 

\begin{rem}\label{rem:link}
 $\M_{0,\bP}^{\trop}$ is a cone, that is, it is a union of rays that all meet at their common origin, which is the cone point of $\M_{0,\bP}^{\trop}$. The link 
$\Delta_{\bP}$ of $\M_{0,\bP}^{\trop}$ is a compact polyhedral complex whose points correspond to the rays of $\M_{0,\bP}^{\trop}$. The topology of $\Delta_{\bP}$ can be explicitly related to the topology of $\M_{0,\bP}$ \cite{ChanGalatiusPayne2016}. It is tempting to try to obtain from $\MSC_{\varphi}^{\trop}$ an induced action on $\Delta_{\bP}$, and exploit the compactness of $\Delta_{\bP}$ in order to study that action. However, this is not straightforward, as we explain in Remark \ref{rem:notdefinedonlink}. 
\end{rem}
 
\subsection{Organization}
In Section \ref{sec:M0n} we give background on the moduli space $\M_{0,\bP}$ of point-configurations, its compactification $\Mbar_{0,\bP}$ and its tropicalization $\M_{0,\bP}^{\trop}$. Section \ref{sec:teich} mirrors Section \ref{sec:M0n}, giving background on Teichm\"uller space, a partial compactification, and a (non-standard) candidate for its tropicalization. In Section \ref{sec:ThurtonsTheorem} we give background on Thurston's theorem, introducing the pullback map, $\varphi$-stable multicurves, Thurston linear transformations and Thurston obstructions. In Section \ref{sec:modulicorrespondence} we give background on Hurwitz spaces and the moduli space correspondence. In Section \ref{sec:HurwitzCompactificationandTropicalization} we give background on compactifications and tropicalizations of Hurwitz spaces. Section \ref{sec:tropicalcorrespondence} contains the main content of this article --- we give a definition of the tropical moduli space correspondence and prove Theorem \ref{thm:main}. The reader will see that the proofs are relatively straightforward once the set-up is appropriately made. 



\subsection{Acknowledgements} I am grateful for useful conversations with Xavier Buff, Charles Favre, Sarah Koch, Curtis McMullen, Kevin Pilgrim, Dylan Thurston and Rob Silversmith, and grateful to Kevin Pilgrim and to an anonymous referee for useful comments on previous drafts. This work was partially funded by UKRI EPSRC New Investigator Award EP/X026612/1.

\section{The moduli space of point-configurations}\label{sec:M0n}

We introduce the moduli space of point-configurations, its compactification, and its tropicalization. Throughout this article, we will assume that $\bP$ is a finite set of cardinality at least $3$. The moduli $\M_{0,\bP}$ is a smooth quasiprojective $(\abs{\bP}-3)$-dimensional variety parameterizing injective maps $\iota:\bP\to \CP$, up to post-composition by automorphisms of $\CP$. If $\bP_1\subset \bP_2 \subset S^2$, there is a forgetful map $\mu:\M_{0,\bP_2}\to\M_{0,\bP_1}$. 

\subsection{Stable curves and the Deligne-Mumford compactification.}\label{sec:M0nbar}

A \textit{prestable} $\bP$-marked genus-$0$ curve is a pair $(C,\iota)$, where $C$ is an at worst nodal curve of arithmetic genus $0$ (i.e. a tree of $\CP$s attached at simple nodes), and $\iota$ is an injective map from $\bP$ into the smooth locus of $C$. In this article, all prestable curves are assumed to be genus-$0$. A prestable curve is called stable if each irreducible component has at least $3$ points that are either nodes or marked. The dual graph of a nodal curve is the graph which has a vertex for each irreducible component of the curve, and an edge between two vertices if the corresponding irreducible components of the curve meet at a node. Given a prestable $\bP$-marked curve, its marked dual tree $\tree$ is the pair $(\underline{\tree}, \widehat\iota)$, where $\underline{\tree}$ is the dual graph of $C$ (which is necessarily a tree), and $\widehat\iota$ is the map from $\bP$ to the set of vertices of $\underline{\tree}$ sending $p\in\bP$ to the vertex of $\underline{\tree}$ corresponding to the irreducible component of $C$ that is marked by $p$. We define the valence of a vertex $v$ of $\tree$ to be 
$$\val(v):=(\mbox{number of edges adjacent to $v$})+\abs{\widehat\iota^{-1}(v)}.$$

A $\bP$-marked tree $\tree$ is called stable if every vertex has valence at least $3$. A prestable $\bP$-marked curve is stable if and only if its marked dual tree is stable. The Deligne-Mumford compactification $\Mbar_{0,\bP}$ of $\M_{0,\bP}$ parameterizes, up to isomorphism, stable $\bP$-marked curves $(C,\iota)$.

Given a stable $\bP$-marked tree $\tree$, the locus in $\Mbar_{0,\bP}$ of curves with marked dual tree isomorphic to $\tree$ is a locally closed set $\stratum_{\tree}^{\circ}$ whose closure is an irreducible smooth subvariety $\stratum_{\tree}$ called a \textit{boundary stratum} of $\Mbar_{0,n}$. The codimension of $\stratum_{\tree}$ is equal to the number of edges of $\tree$. The containment $\stratum_{\tree_1}\subset \stratum_{\tree_2}$ holds if and only if $\tree_2$ can be obtained from $\tree_1$ by contracting a subset of edges of $\tree_1$.

A prestable $\bP$-marked curve $(C,\iota)$ admits a unique \textit{stabilization}, which is a nodal curve $C'$ together with a surjective degree-$1$ map $\stab:C\to C'$ with the property that $(C',\stab\circ\iota)$ is a stable $\bP$-marked curve. The map $\stab$ may contract some irreducible components of $C$ to points. Given $\bP_1\subset \bP_2$, the forgetful map $\mu$ extends to a map $\Mbar_{0,\bP_2}\to \Mbar_{0,\bP_1}$, sending $(C,\iota:\bP_2\into C)$ to the stabilization of $(C,\iota|_{\bP_1})$. The $\bP_1$-marked dual tree of $\mu((C,\iota))$ is determined by the $\bP_2$-marked dual tree of $(C,\iota)$ --- we will denote by $\mu_*$ the induced map from the set of (isomorphism classes of) $\bP_2$-marked stable trees to the set of (isomorphism classes of) $\bP_1$-marked stable trees.

\subsection{Tropical curves and the tropical moduli space}\label{sec:tropicalcurve} The topological space of a $\bP$-marked tree $\tree$ is the topological space obtained from the usual underlying topological space of $\underline{\tree}$ by attaching, for every $p\in \bP$, a ray $\cong [0,+\infty)$ to the vertex $\widehat{\iota}(p)$. The ray corresponding to $p\in \bP$ is often referred to as the \textit{leg} of $p$. A \textit{stable $\bP$-marked genus-$0$ tropical curve} is a metrization of the topological space of a stable $\bP$-marked tree $\tree$, obtained by assigning a positive edge length to every edge of $\tree$, and assigning the legs infinite length. All tropical curves in this article are genus-$0$. 

The space $\cone^{\circ}(\tree)$ of $\bP$-marked tropical curves with underlying marked tree $\tree$ is a cone canonically isomorphic to $\R_{>0}^{\edges(\tree)}$. Here, the coordinates in terms of the natural basis are exactly the edge-lengths. We denote by $\cone(\tree)$ the space $\R_{\ge 0}^{\edges(\tree)}$ of possibly degenerate metrizations of $\tree$. If $\tree_2$ is obtained from $\tree_1$ by contracting a subset $E$ of edges of $\tree_1$, then a degenerate metrization of $\tree_1$ that assigns length $0$ to every $e\in E$ also yields a metrization of $\tree_2$. This realizes the cone $\cone(\tree_2)$ as a face of $\cone(\tree_1)$. Note that the direction of inclusion of cones is the reverse of the direction of inclusion of boundary strata outlined in Section \ref{sec:M0nbar}. The moduli space $\M_{0,\bP}^{\trop}$ is the space of all stable $\bP$-marked tropical curves; it is a cone complex obtained by gluing together the cones $\cone(\tree)$, where $\tree$ ranges over all stable $\bP$-marked trees, along the inclusions $\cone(\tree_2)\into \cone(\tree_1)$ mentioned above. As we explain in Section \ref{sec:nonalgebraictropical}, $\M_{0,\bP}^{\trop}$ can be identified with the cone over the boundary complex of $\Mbar_{0,\bP}$. 

Suppose that $\bP_1\subset\bP_2$ and that $\Sigma$ is a $\bP_2$-marked tropical curve. Set $\mu^{\trop}(\Sigma)$ to be the convex hull, within $\Sigma$, of the legs 
corresponding to $p\in \bP_1$; this is a $\bP_1$-marked tropical curve. The map $\mu^{\trop}:\M_{0,\bP_2}^{\trop}\to \M_{0,\bP_1}^{\trop}$ is piecewise linear, and is the tropicalization of the forgetful map $\mu$.


\section{Teichm\"uller space and its tropicalization}\label{sec:teich}

We introduce some objects from surface topology and propose a candidate for the role of tropical Teichm\"uller space. Given $\bP\subset S^2$, the Teichm\"uller space $\cT(S^2,\bP)$ is a non-algebraic complex manifold of dimension $\abs{\bP}-3$, parameterizing complex structures on $(S^2,\bP)$. More precisely, a point of $\cT(S^2,\bP)$ is an equivalence class of homeomorphisms $\psi:S^2\to \CC\PP^1$, where $\psi_1$ and $\psi_2$ are equivalent if there is a M\"obius transformation $\alpha$ such that $\psi_1$ and $\alpha\circ\psi_2$ are isotopic relative to $\bP$.

The pure mapping class group $\Mod(S^2,\bP)$ is the group of self-homeomorphisms of $S^2$ that fix every element of $\bP$, up to isotopy relative to $\bP$. $\Mod(S^2,\bP)$ acts freely and properly on $\cT(S^2,\bP)$ by precomposition \cite{FarbMargalitPrimer}. There is a forgetful map $\rho:\cT(S^2,\bP)\to \M_{0,\bP}$ sending $[\psi:S^2\to \CP]$ to $[\psi|_{\bP}]$; this map is the quotient by the action of $\Mod(S^2,\bP)$, and realizes $\cT(S^2,\bP)$ as the universal cover of $\M_{0,\bP}$. 

If $\bP_1\subset \bP_2 \subset S^2$, there is a holomorphic forgetful map $\widetilde{\mu}:\cT(S^2,\bP_2)\to\cT(S^2,\bP_1)$. The map $\widetilde{\mu}$ is a lift of the forgetful map $\mu:\M_{0,\bP_2}\to\M_{0,\bP_1}$.

\subsection{Multicurves and the curve complex}\label{sec:multicurves}

A simple closed curve on $S^2\setminus \bP$ is called \textit{essential} if it does not bound a disc, and is called \textit{non-peripheral} if it does not bound a punctured disc. A multicurve on $S^2\setminus \bP$ is a (possibly empty) set of homotopy classes of pairwise disjoint, essential and non-peripheral simple closed curves on $S^2\setminus \bP$.  

The curve complex $\CCom(S^2,\bP)$ of $(S^2,\bP)$ is the simplicial complex defined as follows: vertices of $\CCom(S^2,\bP)$ are in bijection with homotopy classes of simple closed curves on $S^2\setminus \bP$ that are essential and non peripheral. The vertices corresponding to $[\gamma_1],\ldots, [\gamma_{k+1}]$ span a $k$-simplex if and only if there exist pairwise disjoint representatives for $[\gamma_1],\ldots, [\gamma_{k+1}]$, i.e. if and only if $\{[\gamma_1],\ldots, [\gamma_{k+1}]\}$ is a multicurve on $S^2\setminus \bP$. The curve complex is locally infinite, and Gromov hyperbolic \cite{MasurMinskyCurveComplex}. The mapping class group $\Mod(S^2,\bP)$ acts on $\CCom(S^2,\bP)$, but this action can have infinite stabilizers.

\subsection{Augmented Teichm\"uller space}\label{sec:augteich} In order to tell a more complete story we introduce augmented Teichm\"uller space $\Tbar(S^2,\bP)$, a partial compactification of $\cT(S^2,\bP)$ parameterizing isotopy classes of tuples $(C,\iota, \psi)$, where $(C,\iota)$ us a $\bP$-marked stable curve and $\psi:S^2\to C$ is a surjective continuous map satisfying:
\begin{itemize}
    \item $\psi$ is a homeomorphism away from nodes of $C$, and the preimage of every node of $C$ is a simple closed curve of $S^2$, and
    \item $\psi|_{\bP}=\iota$.
\end{itemize}

The action of the mapping class group $\Mod(S^2,\bP)$ on $\cT(S^2,\bP)$ extends continuously to an action on $\Tbar(S^2,\bP)$; the quotient is $\Mbar_{0,\bP}$. The quotient map $\rho:\Tbar(S^2,\bP)\to \Mbar_{0,\bP}$ sends $[C,\iota, \psi]$ to the stable curve $[C,\iota]$. Note that given $[C,\iota, \psi]\in \Tbar(S^2,\bP)$, the set $\Gamma$ of homotopy classes of simple closed curves of $S^2$ contracted to nodes by $\psi$ is a multicurve on $S^2\setminus \bP$. The boundary $\Tbar(S^2,\bP)\setminus\cT(S^2,\bP)$ is stratified, with strata indexed by multicurves. The boundary complex of $\Tbar(S^2,\bP)$ (defined in Section \ref{sec:nonalgebraictropical}) is exactly the curve complex $\CCom(S^2,\bP)$.

\subsection{``Tropical Teichm\"uller space": the space of weighted multicurves}\label{sec:tropicalteichmuller} Given a multicurve $\Gamma$, we denote by $\cone(\Gamma)$ the cone $\R^{\Gamma}_{\ge 0}$. If there is a containment $\Gamma_2\subset \Gamma_1$ of multicurves on $S^2\setminus \bP$, then there is a natural inclusion, as a face, of $\cone(\Gamma_2)$ into $\cone(\Gamma_1)$. We set $\ConeCC(S^2,\bP)$ to be the union of the cones $\cone(\Gamma)$, where $\Gamma$ ranges over all multicurves on $S^2\setminus \bP$, glued along the aforementioned inclusion maps. This is the space of all ``multicurves with positive weights". Note that $\ConeCC(S^2,\bP)$ can be naturally identified with the cone over $\CCom(S^2,\bP)$, the boundary complex of $\Tbar(S^2,\bP)$. We propose here that $\ConeCC(S^2,\bP)$ is the correct candidate for the ``tropical Teichm\"uller space" $\cT(S^2,\bP)^{\trop}$. (There is currently no good framework for proving such a statement; however, in Section \ref{sec:nonalgebraictropical}, we attempt a justification for our proposal.)

Suppose that $\Gamma$ is a multicurve on $S^2\setminus\bP$. We pick pairwise disjoint representative curves $\gamma\in[\gamma]\in\Gamma$ and abuse notation by writing ``$S^2\setminus\Gamma$" to mean $S^2\setminus \bigcup_{[\gamma]\in\Gamma} \gamma$. The dual tree of $\Gamma$ is the stable $\bP_1$-marked tree $\tree(\Gamma)=(\underline{\tree}, \widehat{\iota})$ obtained as follows: vertices of $\underline{\tree}$ correspond to the connected components of $S^2\setminus \Gamma$. Edges of $\underline{\tree}$ are in canonical bijection with $\Gamma$: two vertices of $\underline{\tree}$ are joined by an edge $e([\gamma])$ if the closures of the corresponding connected components of $S^2\setminus \Gamma$ intersect exactly along $\gamma$. For $p\in\bP$, $\widehat{\iota}(p)$ is the vertex of $\underline{\tree}$ corresponding to the connected component of $S^2\setminus \Gamma$ containing $p$. Clearly, there is a canonical isomorphism of cones

\begin{align*}
\rho^{\trop}:\cone(\Gamma)=\R_{\ge0}^{\Gamma}&\to\cone(\tree(\Gamma))=\R_{\ge0}^{\edges(\tree(\Gamma))}\\
\sum_{[\gamma]\in\Gamma} a_{[\gamma]}[\gamma]&\mapsto \sum_{[\gamma]\in\Gamma} a_{[\gamma]}e([\gamma])
\end{align*}

These isomorphisms glue together to yield a continuous piecewise linear surjective map $\rho^{\trop}:\ConeCC(S^2,\bP) \to\M_{0,\bP}^{\trop}$. This map realizes $\M_{0,\bP}^{\trop}$ as the quotient of $\ConeCC(S^2,\bP)$ by $\Mod(S^2,\bP)$.

If $\bP_1\subset \bP_2\subset S^2$, a simple closed curve that is essential and non-peripheral on $S^2\setminus \bP_2$ may be non-essential or peripheral on $S^2\setminus \bP_1$. Also, two curves that are non-homotopic on $S^2\setminus \bP_2$ may be homotopic on $S^2\setminus \bP_1$. For $\Gamma'$ a multicurve on $S^2\setminus \bP_2$, we obtain a (possibly empty) multicurve on $S^2\setminus \bP_1$, denoted by $\widetilde{\mu}_{*}(\Gamma')$, as follows: throw away those $[\gamma']\in \Gamma'$ that are non-essential or peripheral on $S^2\setminus \bP_1$, and consider curves up to homotopy on $S^2\setminus \bP_1$ rather than on $S^2\setminus \bP_2$. There is a natural linear map of cones

\begin{align*}
\widetilde{\mu}^{\trop}:\cone(\Gamma')&\to\cone(\widetilde{\mu}_{*}(\Gamma'))\\
\sum_{[\gamma']\in\Gamma'} a_{[\gamma']}[\gamma']&\mapsto \sum_{[\gamma]\in\widetilde{\mu}_{*}(\Gamma')} \left( \sum_{\substack{[\gamma']\in\Gamma'\\\gamma' \text{homotopic to } \gamma\\\text{on }S^2\setminus\bP^1}}  a_{[\gamma']} \right)[\gamma]
\end{align*}
 
 The above maps $\cone(\Gamma')\to \cone(\widetilde{\mu}_{*}(\Gamma'))$ glue together to determine a continuous piecewise linear map  $\widetilde{\mu}^{\trop}:\ConeCC(S^2,\bP_2)\to\ConeCC(S^2,\bP_1)$. We suggest that this map is a good candidate for the tropicalization of the forgetful map $\widetilde{\mu}$. Note that there is no well-defined forgetful map from $\CCom(S^2,\bP_2)$ to $\CCom(S^2,\bP_1)$. 

\begin{rem} Note that $\widetilde{\mu}^{\trop}$ as defined above is a natural lift of $\mu^{\trop}:\M_{0,\bP_2}^{\trop}\to \M_{0,\bP_1}^{\trop}$. 
\end{rem}

\subsection{Tropicalization, within and without algebraic geometry}\label{sec:nonalgebraictropical}

Here, we attempt to provide more justification for the proposal (Section \ref{sec:tropicalteichmuller}) that
the space of weighted multicurves be considered a good candidate for  ``tropical Teichm\"uller space". In the context of algebraic geometry, the term ``tropicalization" has a few different, but related, meanings. We discuss some of them below (not in their most general senses).  

\begin{enumerate} 
    \item \textbf{Tropicalizations from embeddings in tori.} The most traditional and unambiguous meaning arises in the context of toric geometry. Suppose that $X$ is a closed subvariety of $(\C^{*})^N$. Then its tropicalization $X^{\trop}$ is a polyhedral complex in $\R^N$, see the textbook \cite{MaclaganSturmfels2015} for a definition.\label{it:torustrop}
    
    \item \textbf{Tropicalizations from compactifications.} \label{it:boundarytrop}Suppose $X$ is a smooth, non-compact variety admitting a smooth compactification $\overline{X}$ with the property that the boundary $\overline{X}\setminus X$ is a simple normal crossings hypersurface. Then the \textit{boundary complex of $\overline{X}$} is a simplicial complex that has a vertex for every irreducible component of the boundary, and $k$ vertices span a simplex if and only if the corresponding $k$-fold intersection of hypersurfaces is non-empty. The homotopy type of the boundary complex depends only on $X$, not on the choice of compactification $\overline{X}$ \cite{Thuillier2007, payne2013boundary}. The cone $X^{\trop}$ over the boundary complex of $\overline{X}$ is sometimes referred to as a tropicalization of $X$, see for example \cite{AbramovichCaporasoPayne2015} (including for a connection to the \textit{Berkovich analytification} of $X$). The cone complex encodes the asymptotics of $1$-parameter families of points in $X$ in the following way: A point $x(t)$ of $X$ defined over the field $\C((t))$ of formal Laurent series determines a point $x^{\trop}$ of $X^{\trop}$. The point $x^{\trop}$ lies in the relative interior of the cone of $X^{\trop}$ that corresponds to the boundary stratum of $\overline{X}$ whose relative interior contains the ``$t=0$ limit" of $x(t)$ in $\overline{X}$; the coordinates of $x^{\trop}$ inside that cone are the $t$-adic valuations of local equations for the boundary hypersurfaces. 

    \item \textbf{Tropicalizations from degenerations.}\label{it:dgeneratetrop} Suppose $(C(t), p_1(t),\ldots, p_n(t))$ is a $1$-parameter family of smooth algebraic curves defined over $\C((t))$. There is a tropical curve (metric graph) $C^{\trop}$ associated to $C(t)$; its underlying combinatorial graph is the dual graph of the stable curve obtained as the ``$t=0$ limit" $C(0)$ of $C(t)$, and edge-lengths are determined by the exponential rates of formation of nodes as $C(t)$ degenerates to $C(0)$. 
\end{enumerate}

We briefly summarize how the above notions apply to the case of moduli spaces of curves and their tropical counterparts. We include in our discussion topological and Riemann surfaces (and algebraic curves) of genus greater than $1$, their moduli spaces $\M_{g,n}$ and $\Mbar_{g,n}$, and Teichm\"uller and augmented Teichm\"uller spaces $\cT(g,n)$ and $\Tbar_{g,n}$, all defined analogously to the $g=0$ case discussed elsewhere in this article. We also need to include genus-$g$, $n$-marked tropical curves, which are obtained by assigning edge-lengths to the dual graphs of genus-$g$, $n$-marked stable curves. The moduli space of genus-$g$, $n$-marked tropical curves is denoted $\M_{g,n}^{\trop}$ \cite{AbramovichCaporasoPayne2015}. (Here, for uniformity, we use the notation $\M_{0,n}$ rather than $\M_{0,\bP}$.) The relationship between the algebraic and tropical moduli spaces is particularly satisfying in genus $0$ because all three notions of tropicalization apply, and interact seamlessly with one another. In the case of genus $g>0$, the first notion --- tropicalization from toric embedding --- does not apply, but the other two notions do apply with caveats, as we explain below. 

\begin{itemize}
\item There is a natural embedding of $\M_{0,n}$ into a torus $(\C^*)^{N}$ with respect to which the tropicalization --- in the sense of Item \ref{it:torustrop} above --- is $\M_{0,n}^{\trop}$. There is no such embedding of $\M_{g,n}$ into a torus for $g>0$. 
\item As mentioned earlier, $\M_{0,\bP}^{\trop}$ is the cone over the boundary complex of $\Mbar_{0,\bP}$. In positive genus, there are two complications: firstly, $\Mbar_{g,n}$ is an orbifold, not a complex manifold, and secondly, its boundary is normal crossings but not simple normal crossings. Both technicalities can be addressed, and there is a precise sense in which $\M_{g,n}^{\trop}$ is the cone over the boundary complex of $\Mbar_{g,n}$ \cite{CCUW, ChanGalatiusPayne2016}. 
\item A point $x(t)$ of $\M_{g,n}$ defined over $\C((t))$ determines a point $x^{\trop}$ of $\M_{g,n}^{\trop}$, as described in Item \ref{it:boundarytrop} above --- in this interpretation, $\M_{g,n}^{\trop}$ plays its role as boundary complex. The same point $x(t)$ also determines a smooth genus $g$ curve $X(t)$ defined over $\C((t))$ (or possibly over $\C((t^{\frac{1}{r}}))$), which admits a tropicalization $C^{\trop}$ in the sense of Item \ref{it:dgeneratetrop} above. Since $C^{\trop}$ is a tropical curve, it corresponds to a point of $\M_{g,n}^{\trop}$ --- in this interpretation, $\M_{g,n}^{\trop}$ plays its role as the space of tropical curves. One of the main results of \cite{AbramovichCaporasoPayne2015} is that the points of $\M_{g,n}^{\trop}$ determined by $x(t)$ with respect to these two different interpretations agree with each other. 
\end{itemize}

There is also a fourth connection between the algebro-geometric and the tropical moduli spaces of curves due to Chan-Galatius-Payne \cite{ChanGalatiusPayne2016}; this one departs from the realm of algebraic geometry:

\begin{itemize}
    \item There is a proper surjective continuous map $\lambda:\M_{g,n}\to \M_{g,n}^{\trop}$, natural up to homotopy. Conceptually, this is a non-algebraic version of the tropicalization map defined on ``dynamic" $\C((t))$-points as in Item \ref{it:boundarytrop}. Points of $\M_{0,n}$ --- defined over $\C$ and therefore ``static" --- that are ``close" to some boundary stratum of $\Mbar_{g,n}$ map to the cone of $\M_{g,n}^{\trop}$ corresponding to that stratum. The map $\lambda$ is described in Section 7.1 of \cite{ChanGalatiusPayne2016}; we give a brief summary here. Fix $\epsilon> 0$ sufficiently small so that on any genus-$g$, $n$-punctured hyperbolic surface, any two closed geodesics of length $\le\epsilon$ must be disjoint. Given $x\in \M_{g,n}$, we let $C_x$ be the genus-$g$, $n$-punctured Riemann surface representing $x$, together with its canonical hyperbolic metric. The collection $\Gamma$ of simple closed curves on $C_x$ with length strictly less than $\epsilon$ is a multicurve. We obtain a genus-$g$, $n$-marked tropical curve from the marked dual graph $G$ of $\Gamma$ by assigning length $-\log(\frac{\len(\gamma)}{\epsilon})$ to the edge of $G$ corresponding to $\gamma\in\Gamma$. The tropical curve thus obtained is the point $\lambda(x)\in\M_{g,n}^{\trop}$.
\end{itemize}


Moduli spaces of tropical curves were studied in other guises before the birth of tropical geometry, most notably in mathematical biology and geometric group theory. For example, $\M_{0,n}^{\trop}$ was studied in \cite{BilleraHolmesVogtmann} as the space of phylogenetic trees, and (a certain open subset of) $\M_{g,n}^{\trop}$ was used \cite{SmillieVogtmann} as a tool to compute the Euler characteristic of the outer automorphism group of the free group. There has increasingly been communication between tropical geometry and geometry outside algebraic geometry. It is in this context that we believe that the term ``tropicalization" might have even wider applications. None of the existing definitions of ``tropicalization" seem to apply to Teichm\"uller space. It would be very interesting to have precise definitions that might apply, but we do not attempt to come up with any here. Instead, we make an argument that the relationship between Teichm\"uller space $\cT_{g,n}$ and the cone $\ConeCC(g,n)$ over the curve complex (for a genus-$g$, $n$-marked surface) is analogous to the relationship between $\M_{g,n}$ and $\M_{g,n}^{\trop}$. 

\begin{itemize}
\item The pure mapping class group $\Mod(g,n)$ acts on $\cT_{g,n}$ and $\ConeCC(g,n)$; the quotient of $\cT_{g,n}$ by $\Mod(g,n)$ is $\M_{g,n}$ and the quotient of $\ConeCC(g,n)$ by $\Mod(g,n)$ is $\M_{g,n}^{\trop}$. 
\item As in Item \ref{it:boundarytrop} just above, $\M_{g,n}^{\trop}$ is the cone over the boundary complex of the normal-crossings compactification $\Mbar_{g,n}$ of $\M_{g,n}$. In contrast, $\Tbar_{g,n}$ is not a compactification of $\cT_{g,n}$, and the complex structure of $\cT_{g,n}$ does not extend to the boundary, and so the boundary of $\Tbar_{g,n}$ is in no way normal crossings. However, in a combinatorial sense, the boundary is as good as a simple normal crossings boundary --- the boundary is (complex) codimension-$1$ and stratified, and any non-empty $k$-fold intersection of codimension-$1$ strata has codimension exactly $k$. As a result, there is a well-defined notion of boundary complex that applies --- the boundary complex of $\Tbar_{g,n}$ is the simplicial complex that has a vertex for every closed codimension-$1$ stratum, and $k$ vertices span a simplex if and only if the corresponding codimension-$1$ strata have non-empty intersection. It is straightforward from the definition of $\Tbar_{g,n}$ (Section \ref{sec:augteich}) that the boundary complex of $\Tbar_{g,n}$ can be canonically identified with the curve complex $\CCom(g,n)$, and so
the cone over the boundary complex is exactly $\ConeCC(g,n)$.
\item The non-algebro-geometric continuous proper map $\lambda: \M_{g,n}\to \M_{g,n}^{\trop}$ of Chan-Galatius Payne (discussed above) lifts to a $\Mod(g,n)$-equivariant continuous (but non-proper) map $\tilde{\lambda}:\cT_{g,n}\to \ConeCC(g,n)$; this is transparently clear from the description of $\lambda$ given above.
\end{itemize}

\begin{rem}
     There is a compelling related story about \textit{Schottky space}, a non-algebraic complex manifold that can be realized as the quotient of Teichm\"uller space by a certain subgroup of the mapping class group. It is established in \cite{HerrlichGerritzen, poineau2022schottky, ulirsch2021non, ChanMeloViviani} that the tropicalization of Schottky space is the simplicial completion of \textit{Culler-Vogtmann Outer Space} \cite{CullerVogtmann}. For example, there is a partial compactification of Schottky space whose boundary complex is the completion of Outer Space. There seems to be at least one key difference between the cases of Schottky space and Teichm\"uller space --- the partial compactification of Schottky space extends its complex structure, whereas, as noted above, the complex structure of $\cT_{g,n}$ does not extend to $\Tbar_{g,n}$.
\end{rem}



\section{Thurston's topological characterization of PCF rational functions}\label{sec:ThurtonsTheorem}

In this section, we summarize the relevant parts of Thurston's results (written in \cite{DouadyHubbard1993}) giving a topological characterization of PCF rational functions. 

\subsection{PCF branched coverings and PCF rational functions}\label{sec:PCFcovers}


A map $\varphi:S^2 \to S^2$ is called a \textit{branched cover} if there is a finite set $\bB\subset S^2$ such that $\varphi: S^2\setminus \varphi^{-1}(\bB)\to S^2\setminus \bB$ is a covering map. Here, we will only deal with orientation-preserving branched covers. The restriction of $\varphi$ to a small punctured neighbourhood of any point $x\in S^2$ is a covering map of some degree; this is the \textit{local degree} of $\varphi$ at that point. A \textit{critical point} of $\varphi$ is a point at which the local degree is greater than $1$. Every rational function $f:\CC\PP^1\to \CC\PP^1$ is an orientation-preserving branched cover, and its critical points are exactly the points at which the derivative vanishes. The local degree of $f$ at $x$ is the unique $r>0$ such that the first $r-1$ derivatives of $f$ vanish at $f$ but the $r$th does not. 

The \textit{post-critical set} of a branched cover is the set $\{\varphi^n(x)\st n>0, x \mbox{ critical point of } \varphi \}$. A branched covering $\varphi: S^2\to S^2$ is called PCF (post-critically finite) if its post-critical set is finite. Two PCF branched covers $\varphi$ and $\varphi'$ with post-critical sets $\bP$ and $\bP'$ respectively are called \textit{combinatorially equivalent} (also referred to as \textit{Thurston equivalent}) if there are orientation-preserving homeomorphisms $\psi_1, \psi_2: S^2\to S^2$ such that 
\begin{itemize}
    \item $\psi_1$ and $\psi_2$ are isotopic relative to $\bP$, and both map $\bP$ onto $\bP'$, and
    \item $\varphi'=\psi_2\circ \varphi\circ \psi_1^{-1}$. 
\end{itemize}

Thurston's influential result answers the question: given a PCF branched cover $\varphi$, when is it combinatorially equivalent to a PCF rational function? In fact, if $\varphi$ has hyperbolic orbifold (as in footnote \ref{fn:hyperbolicorbifold}), then either (A) $\varphi$ is combinatorially equivalent to a rational map $f$, which is unique up to M\"obius conjugacy, or (B) $\varphi$ has a specific type of topological obstruction in the form of an $\varphi$-stable multicurve, defined in Section \ref{sec:obstructions}.

\subsection*{Fixing a branched cover} We now fix $\varphi$, a PCF branched cover with hyperbolic orbifold, of degree at least $2$, and whose post-critical set $\bP$ has cardinality at least $4$. Note that $\bP\subset \varphi^{-1}(\bP)$. We denote by $\widetilde{\mu}$ the forgetful map $\cT(S^2,\varphi^{-1}(\bP))\to\cT(S^2,\bP)$, denote by $\mu$ the forgetful map from $\M_{0,\varphi^{-1}(\bP})$ to $\M_{0,\bP}$, denote by $\rho$ the map $\cT(S^2,\bP)\to\M_{0,\bP}$, and denote by $\widetilde{\rho}$ the map $\cT(S^2,\varphi^{-1}(\bP))\to\M_{0,\varphi^{-1}(\bP)}$. 

\subsection{The pullback map on Teichm\"uller space}\label{sec:pullback}
A homeomorphism $\psi: S^2\to \CC\PP^1$ determines a complex structure on $S^2$. One can pull this complex structure back on charts along the branched cover $\varphi$ to obtain a new complex structure on $S^2$, and thus a global homeomorphism $\psi':S^2\to\CC\PP^1$; by construction, $\psi'\circ \varphi\circ \psi:\CC\PP^1\to\CC\PP^1$ is holomorphic. Although there are choices involved, the class of $\psi'$ in $\cT(S^2,\varphi^{-1}(\bP))$ is well-defined and only depends on the class of $\psi$ in $\cT(S^2,\bP)$. The map $\widetilde{\TP}_{\varphi}:\cT(S^2,\bP)\to \cT(S^2,\varphi^{-1}(\bP))$ that sends $[\psi]$ to $[\psi']$ is holomorphic. Composing with the forgetful map $\widetilde{\mu}$, we obtain a holomorphic selfmap $\TP_{\varphi}:=\widetilde{\mu}\circ\widetilde{\TP}_{\varphi}$ of $\cT(S^2,\bP)$, called \textit{Thurston's pullback map}.

\subsection{$\varphi$-stable multicurves and Thurston obstructions}\label{sec:obstructions}

If $\Gamma$ is a multicurve on $S^2\setminus \bP$, then its pullback $\widetilde{\varphi}^{*}(\Gamma):=\{[\varphi^{-1}(\gamma)]\st [\gamma]\in \Gamma\}$ is a multicurve on $S^2\setminus \varphi^{-1}(\bP)$. Some (or all) of the components components of $\widetilde{\varphi}^{*}(\Gamma)$ may be peripheral or non-essential on $S^2\setminus\bP$. We set $\varphi^{*}(\Gamma)$ to be the possibly empty multicurve $\widetilde{\mu}_{*}(\widetilde{\varphi}^{*}(\Gamma))$.


 There is a natural \textit{Thurston linear transformation} $\TLTtilde_{\varphi,\Gamma}$ from $\R^{\Gamma}$ to $\R^{\widetilde{\varphi}^{*}(\Gamma)}$, defined as follows in terms on its action on the natural basis:

\begin{align}
    \TLTtilde_{\varphi,\Gamma}([\gamma])= \sum_{\substack{\gamma' \text{simple closed curve}\\ \varphi(\gamma')=\gamma}} \frac{1}{\deg(\varphi:\gamma'\to\gamma)} [\gamma'] \label{eq:TropicalThurstonPullback1}
\end{align}

The composition $\TLT_{\varphi,\Gamma}:=\widetilde{\mu}^{\trop}\circ\TLTtilde_{\varphi,\Gamma}$ is also called the Thurston linear transformation; this is a map from $\R^{\Gamma}$ to $\R^{\varphi^{*}(\Gamma)}$ acting on the natural basis as follows:

\begin{align}
    \TLT_{\varphi,\Gamma}([\gamma])= \sum_{\substack{[\gamma''] \text{ essential and}\\ \text{non-peripheral rel. } \bP}} \sum_{\substack{\gamma' \text{simple closed}\\ \varphi(\gamma')=\gamma\\ \gamma' \text{ homotopic to }\gamma'' \text{ rel. } \bP}} \frac{1}{\deg(\varphi:\gamma'\to\gamma)} [\gamma''] \label{eq:TropicalThurstonPullback2}
\end{align}

Both linear transformations \eqref{eq:TropicalThurstonPullback1} and \eqref{eq:TropicalThurstonPullback2} are commonly referred to as \textit{Thurston linear transformations} associated to the multicurve $\Gamma$.

A multicurve $\Gamma$ is called \textit{$\varphi$-stable} if $\varphi^{*}(\Gamma)\subset\Gamma$. If $\Gamma$ is $\varphi$-stable then the Thurston linear transformation associated to $\Gamma$ can be considered to be a linear self-map of $\R^{\Gamma}$. Given that the Thurston linear transformation has non-negative coefficients with respect to the natural basis, its dominant eigenvalue $\Lambda_{\varphi}(\Gamma)$ is non-negative and has an eigenvector in $\R^{\Gamma}_{\ge 0}$. A $\varphi$-stable multicurve $\Gamma$ is called a \textit{Thurston obstruction} if $\Lambda_{\varphi}(\Gamma)\ge 1$. 

\subsection{Thurston's theorem} 

The pullback map $\TP_{\varphi}$ admits a fixed point if and only if $\varphi$ does not admit a Thurston obstruction. Also, $\TP_{\varphi}$ has a fixed point if and only if $\varphi$ is combinatorially equivalent to a rational function: if $[\psi]$ is a fixed point of $\TP_{\varphi}$, then there is a homeomorphism $\psi':S^2\to \CP$ that is isotopic to $\psi$ rel. $\bP$, and such that $\psi'\circ \varphi\circ \psi^{-1}$ is a PCF rational function. In fact, $\TP_{\varphi}$ is distance-nonincreasing and $\TP_{\varphi}^2$ is strictly distance-decreasing on $\cT(S^2,\bP)$, and so a fixed point, if it exists, is unique. We conclude that either (A) $\varphi$ is combinatorially equivalent to a rational map, which is unique up to M\"obius conjugacy, or (B) $\varphi$ admits a Thurston obstruction $\Gamma$.

\subsection{The augmented pullback map} \label{sec:augmentedpullback}Selinger \cite{selinger2012thurston} showed that Thurston's pullback map extends continuously to augmented Teichm\"uller space $\Tbar(S^2,\bP)$. We describe this extension in more detail in the proof of Lemma \ref{lem:combinatorialtypefrommulticurve}.

\subsection{``The tropical pullback map"}\label{sec:tropicalpullback} 

Observe that since the Thurston linear transformation \eqref{eq:TropicalThurstonPullback1} has non-negative coefficients with respect to the natural bases, it restricts to a map $\widetilde{\TP}_{\varphi}^{\trop}(\Gamma):\cone(\Gamma)\to \cone(\widetilde{\varphi}^{*}(\Gamma)).$
It is straightforward to check that the maps $\widetilde{\TP}_{\varphi}^{\trop}(\Gamma)$ glue together along faces to yield a piece-wise-linear map $\widetilde{\TP}_{\varphi}^{\trop}:\ConeCC(S^2,\bP)\to\ConeCC(S^2,\varphi^{-1}(\bP)).$ Similarly, the linear transformation \eqref{eq:TropicalThurstonPullback2} restricts to a map $\TP_{\trop}^{\trop}(\Gamma):\cone(\Gamma)\to \cone(\varphi^{*}(\Gamma)),$
and the maps $\TP_{\varphi}^{\trop}(\Gamma)$ glue together along faces to yield a piece-wise-linear map $\TP_{\varphi}^{\trop}:\ConeCC(S^2,\bP)\to\ConeCC(S^2,\varphi^{-1}(\bP)).$  We use the notation $\widetilde{\TP}_{\varphi}^{\trop}$ and $\TP_{\varphi}^{\trop}$ in order to propose that these piecewise linear maps ought to be considered to be tropicalizations of $\widetilde{\TP}_{\varphi}$ and $\TP_{\varphi}$ respectively. Note that $\TP_{\varphi}^{\trop}=\widetilde{\mu}^{\trop}\circ \widetilde{\TP}_{\varphi}^{\trop}.$ 

Under this interpretation, a $\varphi$-stable multicurve $\Gamma$ is exactly a multicurve such that $\cone(\Gamma)$ is invariant under the action of $\TP_{\varphi}^{\trop}$, in which case there is a ray in $\cone(\Gamma)$ that is fixed and scaled by the dominant eigenvalue $\Lambda_{\varphi}(\Gamma)$. A Thurston obstruction therefore is the same as a ray in $\ConeCC(S^2,\bP)$ that is fixed by $\TP_{\varphi}^{\trop}$, and scaled by a factor $\ge 1$.

\section{Hurwitz spaces and the moduli space correspondence}\label{sec:modulicorrespondence}

Here, we summarize the results of \cite{Koch2013} as they apply to this article. There is \textit{Hurwitz space} --- a non-compact smooth variety $\Hcomb(\varphi)$ parameterizing marked rational functions on $\CP$ that have the same branching profile as $\varphi$. More precisely, $\Hcomb(\varphi)$ parameterizes tuples $[f,\iota_1,\iota_2]$, where 
\begin{itemize}
    \item $\iota_1:\bP\into\CP$ and $\iota_2:\varphi^{-1}(\bP)\into\CP$ are injections,
    \item $f:(\CP,\iota_2(\varphi^{-1}(\bP)))\to(\CP,\iota_1(\bP))$ is a rational function that has the same degree as $\varphi$, and that maps $\iota_2(\varphi^{-1}(\bP))$ to $\iota_1(\bP)$ exactly as $\varphi$ maps $\varphi^{-1}(\bP)$ to $\bP$ (that is, $f\circ \iota_2=\iota_1\circ\varphi$), including with the same local degrees. (Note that this forces $f$ to have no critical values away from $\iota_1(\bP)$.)
\end{itemize}

Two tuples $[f,\iota_1,\iota_2]$ and $[f',\iota_1',\iota_2']$ are equal in $\Hcomb(\varphi)$ if they differ by changes of coordinates on $\CP$, i.e. if there are automorphisms $\alpha_1$ and $\alpha_2$ of $\CP$ such that $\iota_1'=\alpha_1\circ\iota_1$, $\iota_2'=\alpha_2\circ\iota_2$, and $f'=\alpha_1\circ f\circ(\alpha_2)^{-1}$. In general, $\Hcomb(\varphi)$ is disconnected, with connected components corresponding to a type of non-dynamical topological equivalence known as \textit{Hurwitz equivalence}, defined below.

\begin{Def}\label{def:hurwitzeq}
   Given $\iota_1',\iota_1'':\bP\into S^2$ and $\iota_2',\iota_2'':\varphi^{-1}(\bP)\into S^2$ injections,  and $\varphi':(S^2,\iota_2'(\varphi^{-1}(\bP)))\to(S^2,\iota_1'(\bP))$ and $\varphi'':(S^2,\iota_2''(\varphi^{-1}(\bP)))\to(S^2,\iota_1''(\bP))$ branched covers, we say that $\varphi'$ and $\varphi''$ are \textit{Hurwitz equivalent} if there exist homeomorphisms $\psi_1,\psi_2:S^2\to \CP$ such that $(\psi_1)\circ\iota_1'=\iota_1''$ on $\bP$, $\psi_2\circ\iota_2'=\iota_2''$ on $\varphi^{-1}(\bP)$, and $\varphi''=\psi_1\circ \varphi'\circ(\psi_2)^{-1}$. Note that $\psi_1$ and $\psi_2$ are \textbf{not} required to be isotopic rel. $\bP$.
\end{Def}

$\Hcomb(\varphi)$ has a unique connected component $\cH(\varphi)$ consisting of marked rational functions $f$ that are Hurwitz equivalent to $\varphi$. If $\varphi'$ is Hurwitz equivalent to $\varphi$ then $\cH(\varphi')=\cH(\varphi)$.

There are maps $\pi_1,\pi_2:\Hcomb(\varphi)\to\M_{0,\bP}$ send $[f,\iota_1,\iota_2]$ to $[\iota_1]$ and $[\iota_2|_{\bP}]$ respectively. The map $\pi_1$ is a covering map. Note that we can factor $\pi_2$ as $\pi_2=\mu\circ\widetilde{\pi}_2$, where $\widetilde{\pi}_2:\Hcomb(\varphi)\to\M_{0,\varphi^{-1}(\bP)}$ sends $[f,\iota_1,\iota_2]$ to $[\iota_2]$. The pair of maps $\pi_1,\pi_2:\cH(\varphi)\to\M_{0,\bP}$ as well as the multivalued self-map $$\MSC_{\varphi}:=(\pi_2)|_{\cH(\varphi)}\circ(\pi_1)|_{\cH(\varphi)}^{-1}$$ of $\M_{0,\bP}$ are often referred to as the \textit{moduli space correspondence}\footnote{The Hurwitz space $\cH(\varphi)$ defined here is a finite cover of the Hurwitz space introduced in \cite{Koch2013}, and so our moduli space correspondence is off from the standard moduli space correspondence by a global multiplicity factor. This minor technical modification doesn't affect any of the important properties but is better-behaved in the contexts of compactifications and tropicalizations of Hurwitz spaces.}. If $\varphi'$ is Hurwitz equivalent to $\varphi$ then $\MSC_{\varphi'}=\MSC_{\varphi}$. The holomorphic Thurston pullback map $\TP_{\varphi}$ descends, via the universal covering map $\rho$, to the algebraic moduli space correspondence on $\M_{0,\bP}$ in the following sense: there is a holomorphic covering map $\nu_{\varphi}:\cT(S^2,\bP)\to \cH(\varphi)$ such that $\rho=\pi_1\circ\nu_{\varphi}$, and $\rho\circ \TP_{\varphi}=\pi_2\circ\nu_{\varphi}$. The following diagram encapsulates the relationship between the moduli space correspondence and Thurston's pullback map. 

\begin{equation}\label{diag:noncompact}
  \begin{tikzpicture}
    \matrix(m)[matrix of math nodes,row sep=3em,column sep=8em,minimum
    width=2em] {
      \cT(S^2,\bP) &&\cT(S^2,\bP)\\
      &\cH(\varphi)&\\
      \M_{0,\mathbf{P}}&&\M_{0,\mathbf{P}}\\
    };
    \path[-stealth] (m-1-1) edge node [above] {$\TP_{\varphi}$} (m-1-3);
    \path[-stealth] (m-1-1) edge node [above right] {$\nu_{\varphi}$} (m-2-2);
    \path[-stealth] (m-1-1) edge node [left] {$\rho$} (m-3-1);
    \path[-stealth] (m-1-3) edge node [left] {$\rho$} (m-3-3);
    \path[-stealth] (m-2-2) edge node [above left]
    {$\pi_1$} (m-3-1);
    \path[-stealth] (m-2-2) edge node [above right] {$\pi_2$} (m-3-3);
  \end{tikzpicture}
\end{equation}
  A fixed point of the moduli space correspondence is a point $x=[f,\iota_1,\iota_2]\in \cH(\varphi)$ such that $[\iota_1]=\pi_1(x)=\pi_2(x)=[\iota_2|_{\bP}]$. Note this means that $[\iota_1]$ is only a ``weakly" fixed point of the multivalued map $\MSC_{\varphi}$ in the sense that it is contained in its own image. In general, $\MSC_{\varphi}$ is unlikely to have a ``strongly" fixed point, i.e. a point whose image contains itself and no other point. Fixed points of the moduli space correspondence correspond to PCF rational functions that are Hurwitz equivalent to $\varphi$. 

\section{Compactifications and tropicalizations of Hurwitz spaces}\label{sec:HurwitzCompactificationandTropicalization}

\subsection{Admissible covers}\label{sec:admissiblecovers}

An \textit{admissible cover}\footnote{This usage of the term \textit{admissible cover} to mean an algebraic map of nodal curves is standard in algebraic geometry. However it unfortunately conflicts with the usage of the same term in eg. \cite{KochPilgrimSelinger2016} to mean a topological branched cover of marked surfaces.} is a finite map $f:C_2\to C_1$ of pre-stable curves satisfying
\begin{enumerate}
    \item smooth points of $C_2$ map to smooth points of $C_1$, and nodes of $C_2$ maps to nodes of $C_1$, 
    \item (\textit{balancing at nodes}) the two branches at a node $x$ of $C_2$ map to distinct branches at $f(x)$, with equal local degrees at $x$.
\end{enumerate}

The above conditions ensure that $f$ is can be obtained as a limit of a family of maps between smooth algebraic curves.  An \textit{admissible cover with the same profile as $\varphi$} is a tuple $(f, C_1, \iota_1, C_2,\iota_2)$, where
\begin{itemize}
    \item $(C_1, \iota_1)$ is a $\bP$-marked stable curve,
    \item $(C_2, \iota_2)$ is a $\varphi^{-1}(\bP)$-marked stable curve,
    \item $f:C_2\to C_1$ is an admissible cover of the same degree as $\varphi$, such that
    \begin{itemize}
        \item $f$ maps $\iota_2(\varphi^{-1}(\bP))$ to $\iota_1(\bP)$ exactly as $\varphi$ maps $\varphi^{-1}(\bP)$ to $\bP$ (that is, $f\circ \iota_2=\iota_1\circ\varphi$), including with the same local degrees,
        \item $f$ has no critical points away from $\iota_2(\varphi^{-1}(\bP))$ and the set of nodes of $C_2$.
    \end{itemize}
\end{itemize}

Due to works of Harris-Mumford \cite{HarrisMumford1982}, there is a possibly reducible projective variety $\Hbarcomb(\varphi)$, with mild singularities, that parameterizes admissible covers with the same profile as $\varphi$. $\Hbarcomb(\varphi)$ contains $\cH(\varphi)$ as an open subset. Set $\Hbar(\varphi)$ to be the closure of $\cH(\varphi)$ in $\Hbarcomb(\varphi)$ -- this is an irreducible component of $\Hbarcomb(\varphi)$. If $[(f, C_1, \iota_1, C_2,\iota_2)]\in \Hbar(\varphi)$ we say that $f$ is an admissible cover of \textit{topological type $\varphi$}\footnote{There is in principle a way to characterize $f$ of topological type $\varphi$ in terms of the Hurwitz equivalence classes of the restrictions of $f$ to the irreducible $\CP$ components of $C_2$. However, it is subtle and, as far as we are aware, has not been carefully worked out in the literature.}. We set $\Htildecomb(\varphi)$ to be the normalization of $\Hbarcomb(\varphi)$ -- by \cite{HarrisMumford1982} $\Htildecomb(\varphi)$ is smooth, and by Abramovich-Corti-Vistoli \cite{AbramovichCortiVistoli2003} it admits an interpretation as a moduli space of covers of orbifold curves. $\Htildecomb(\varphi)$ contains as a connected component the normalization $\Htilde(\varphi)$ of $\Hbar(\varphi)$ --- this is a smooth compactification of $\cH(\varphi)$.

\subsection{The moduli space correspondence on $\Mbar_{0,\bP}$}\label{sec:compactifiedcorrespondence}

The maps $\pi_1$ and $\pi_2$ from $\Hcomb(\varphi)$ to $\M_{0,\bP}$ extend to regular maps from both $\Hbarcomb(\varphi)$ and its normalization $\Htildecomb(\varphi)$ to $\Mbar_{0,\bP}$. The extension of $\pi_1$ to $\Htildecomb(\varphi)$ is a ramified cover. As a result, the multivalued moduli space correspondence $\MSC_{\varphi}$ extends to $\Mbar_{0,\bP}$ ``without indeterminacy" \cite{Ramadas2015}. Also, just as $\pi_2$ factors as 
$\mu\circ\widetilde{\pi_2}$, so does its extension to $\Htildecomb(\varphi)$. By \cite{HinichVaintrobAugmented}, the holomorphic map $\nu_{\varphi}:\cT(S^2\bP)\to \cH(\varphi)$ extends continuously to a surjective map  $\nu_{\varphi}:\Tbar(S^2,\bP)\to\Hbar(\varphi)$, which in turn lifts to a continuous surjective map $\widetilde{\nu_{\varphi}}:\Tbar(S^2,\bP)\to\Htilde(\varphi)$. We describe the extension $\nu_{\varphi}$ in more detail in the proof of Lemma \ref{lem:combinatorialtypefrommulticurve}. The upshot is that Diagram \eqref{diag:noncompact} from Section \ref{sec:modulicorrespondence} extends to partial/full compactifications of the spaces involved as depicted below, and the compactified moduli space correspondence descends from the augmented pullback map.

\begin{equation}\label{diag:compact}
  \begin{tikzpicture}
    \matrix(m)[matrix of math nodes,row sep=3em,column sep=8em,minimum
    width=2em] {
      \Tbar(S^2,\bP) &&\Tbar(S^2,\bP)\\
      &\Hbar(\varphi)&\\
      \Mbar_{0,\mathbf{P}}&&\Mbar_{0,\mathbf{P}}\\
    };
    \path[-stealth] (m-1-1) edge node [above] {$\TP_{\varphi}$} (m-1-3);
    \path[-stealth] (m-1-1) edge node [above right] {$\nu_{\varphi}$} (m-2-2);
    \path[-stealth] (m-1-1) edge node [left] {$\rho$} (m-3-1);
    \path[-stealth] (m-1-3) edge node [left] {$\rho$} (m-3-3);
    \path[-stealth] (m-2-2) edge node [above left]
    {$\pi_1$} (m-3-1);
    \path[-stealth] (m-2-2) edge node [above right] {$\pi_2$} (m-3-3);
  \end{tikzpicture}
\end{equation}

\subsection{Combinatorial types of admissible covers}\label{sec:hurwitzboundarystratification}

Suppose $(f, C_1, \iota_1, C_2,\iota_2)$ is an admissible cover with the same profile as $\varphi$. The \textit{combinatorial type} $\Lambda=(\tree_1,\tree_2,F,\edgedeg, \varphi|_{\varphi^{-1}(\bP)}, \legdeg)$ of $f$ records 
\begin{itemize}
\item the marked dual trees $\tree_1=(\underline{\tree_1},\widehat{\iota_1})$ and $\tree_2=(\underline{\tree_2},\widehat{\iota_2})$ of $C_1$ and $C_2$ respectively,
\item the graph homomorphism $F:\underline{\tree_2}\to\underline{\tree_1}$ of dual trees that encodes on vertices how irreducible components of $C_2$ map to irreducible components of $C_1$, and on edges how nodes of $C_2$ map to nodes of $C_1$,
\item the \textit{edge-degree} map $\edgedeg:\edges(\tree_2)\to \Z_{>0}$: for $e$ an edge of $\tree_2$ corresponding to a node $\eta$ of $C_2$, $\edgedeg(e)$ is the local degree of $f$ at $\eta$ (this is well-defined by the balancing condition), 
\item the restriction $\varphi|_{\varphi^{-1}(\bP)}$ recording how legs of $\tree_2$ map to legs of $\tree_1$, and 
\item the \textit{leg-degree} map $\legdeg:\varphi^{-1}(\bP)\to \Z_{>0}$ sending $p$ to the local degree of 
$\varphi$ at $p$ (equivalently, that of $f$ at $\iota_2(p)$). 
\end{itemize} 

Note that the final two entries of the combinatorial type of $f$ depend only on $\varphi$. If $\Lambda$ is the combinatorial type of some admissible cover with the same profile as $\varphi$, we will say that $\Lambda$ is a \textit{realizeable combinatorial type of profile $\varphi$}. If $\Lambda$ is the combinatorial type of some admissible cover with the same topological type as $\varphi$, we will say that $\Lambda$ is a \textit{realizeable combinatorial type of topological type $\varphi$}.

The locus $\cH(\Lambda)\subset \Hbarcomb(\varphi)$ consisting of admissible covers with fixed combinatorial type $\Lambda$ is a locally-closed subset that is isomorphic to a product of Hurwitz spaces.  Its closure $\Hbar(\Lambda)$ is a not-necessarily-irreducible subvariety called a boundary stratum. The boundary stratum $\Hbar(\Lambda)$ has a component contained in $\Hbar(\varphi)$ if and only if $\Lambda$ has topological type $\varphi$. 

Suppose that $\Lambda=(\tree_1,\tree_2,F,\edgedeg, \varphi|_{\varphi^{-1}(\bP)}, \legdeg)$ is a realizeable combinatorial type of profile $\varphi$, and that $E$ is a (possibly empty) subset of edges of $\tree_1$. Set $\tree_1'$ to be the tree obtained from $\tree_1$ by contracting all the edges not in $E$, and set $\tree_2'$ to be the tree obtained from $\tree_1$ by contracting all the edges $e$ such that $F(e)\not\in E$. There is an induced graph homomorphism $F':\underline{\tree_2'}\to \underline{\tree_1'}$. Set $\Lambda'=(\tree_1',\tree_2',F',\edgedeg|_{E}, \varphi|_{\varphi^{-1}(\bP)}, \legdeg)$. Then $\Lambda'$ is also a realizeable combinatorial type of profile $\varphi$; in fact, any admissible cover $f$ with combinatorial type $\Lambda$ can be obtained as a limit of a family of admissible covers with combinatorial type $\Lambda'$ \cite{HarrisMumford1982}. In this setting we say that $\Lambda'$ is obtained from $\Lambda$ by edge-contraction. The containment $\Hbar(\Lambda)\subset\Hbar(\Lambda')$ holds if and only if $\Lambda'$ is obtained from $\Lambda$ by edge-contraction. If $\Lambda$ has topological type $\varphi$ and $\Lambda'$ is obtained from $\Lambda$ by edge-contraction then $\Lambda'$ also has topological type $\varphi$.

\begin{rem}\label{rem:topologicaltype}
It is possible but computationally difficult to enumerate the realizable combinatorial types of profile $\varphi$.  Doing so involves solving cases of the \textit{Hurwitz realizability problem} of deciding whether there exist branched covers with a specified ramification profile, or, equivalently, whether one can factor the identity element in a symmetric group as a product of elements with specified cycle types. However, it is so far not well-understood how to determine whether or not a realizable combinatorial type has topological type $\varphi$. 
\end{rem}

\subsection{The tropical Hurwitz space}\label{sec:tropicalhurwitz}

In this section, we summarize some of \cite{CavalieriMarkwigRanganathan2016} as it applies to the moduli space correspondence. Suppose that $\Lambda=(\tree_1,\tree_2,F,\edgedeg, \varphi|_{\varphi^{-1}(\bP)}, \legdeg)$ is a realizeable combinatorial type of profile $\varphi$. A \textit{tropical admissible cover} of combinatorial type $\Lambda$ is a tuple $(\Sigma_1,\Sigma_2,\mathbf{F})$, where:
\begin{itemize}
    \item $\Sigma_1$ and $\Sigma_2$ are tropical curves ($\bP$- and $\varphi^{-1}(\bP)$- marked respectively) with underlying trees $\tree_1$ and $\tree_2$ respectively,
    \item $\mathbf{F}:\Sigma_2\to \Sigma_1$ is a piecewise affine map realizing the combinatorial graph homomorphism $F$, such that the leg of $\Sigma_2$ corresponding to $p\in\varphi^{-1}(\bP)$ maps to the leg of $\Sigma_1$ corresponding to $\varphi(p)$, and such that slopes encode local degrees as follows:
    \begin{itemize}
    \item for $e$ an edge of $\Sigma_2$, $\slope(\mathbf{F}|_{e})=\edgedeg(e)$, and
    \item for $l$ a leg of $\Sigma_2$ corresponding to $p\in \varphi^{-1}(\bP)$, $\slope(\mathbf{F}|_{l})=\text{(local deg. of $\varphi$ at $p$)}$.
    \end{itemize}
\end{itemize}

Given $\Lambda$ together with $\Sigma_1$ a tropical curve with underlying tree $\tree_1$, there is a unique tropical admissible cover $(\Sigma_1,\Sigma_2,\mathbf{F})$ with combinatorial type $\Lambda$: the edge-lengths of $\Sigma_2$ and the piecewise affine structure of $\mathbf{F}$ are determined by $F$, $\edgedeg$ and $\legdeg$. This tells us that the space $\cone^{\circ}(\Lambda)$ of tropical  admissible covers of combinatorial type $\Lambda$ is a cone isomorphic to $\R_{>0}^{\abs{\edges(\tree_1)}}$. However, unlike the case of a cone $\cone^{\circ}(\tree)=\R_{>0}^{\edges(\tree)}$ of $\M_{0,\bP}^{\trop}$, where the cone coordinates are exactly the edge lengths of the tropical curve, in the case of $\cone^{\circ}(\Lambda)$ the relationship between the cone coordinates and the edge-lengths of $\Sigma_1$ is more complicated, as we describe. In order to avoid confusion, we will use the notation $\check{e}$ for the cone coordinate on $\cone^{\circ}(\Lambda)$ corresponding to $e\in\edges(\tree_1)$. Now, for $e\in \edges(\tree_1)$, set $\lcmdeg(e):=\lcm \{\edgedeg(e')\}_{F(e')=e}$. Then, from $\sum_{e\in\edges(\tree_1)} a_e \check{e}\in\cone^{\circ}(\Lambda)$ we obtain a tropical curve $\Sigma_1$ by assigning, to edge $e$ of $\tree_1$, the length $\lcmdeg(e)\cdot a_e$. This edge-length assignment determines a unique tropical admissible cover of combinatorial type $\Lambda$. It follows that an edge $e'$ of $\tree_2$ mapping via $F$ to $e$ is assigned length $\frac{\lcmdeg(e)}{\edgedeg(e')}\cdot a_e$ (\cite{CavalieriMarkwigRanganathan2016} Section 5.4). The closed cone $\cone(\Lambda)=\R_{\ge0}^{\{\check{e}\}}$ can be canonically identified with the space of of tropical admissible covers whose combinatorial types can be obtained from $\Lambda$ by edge-contraction. Set $\Htropcomb(\varphi)$ to be the \textit{tropical Hurwitz space} parameterizing tropical admissible covers of profile $\varphi$; this is a complex constructed by gluing the cones $\{\cone(\Lambda)\}_{\Lambda \mbox{ realizeable}}$ together along the inclusions of faces $\cone(\Lambda')\into\cone(\Lambda)$, where $\Lambda'$ is obtained from $\Lambda$ by edge-contraction. 

There are natural piecewise linear maps $\pi_1^{\trop}$ and $\pi_2^{\trop}$ from $\Htropcomb(\varphi)$ to $\M_{0,\bP^{\trop}}$, sending $(\Sigma_1,\Sigma_2,\mathbf{F})$ to $[\Sigma_1]$ and $[\mu^{\trop}(\Sigma_2)]$ respectively; clearly $\pi_2^{\trop}$ factors as $\mu^{\trop}\circ\widetilde{\pi_2}^{trop}$, where $\widetilde{\pi_2}^{\trop}:\Htropcomb(\varphi)\to \M_{0,\varphi^{-1}(\bP)}$ sends $(\Sigma_1,\Sigma_2,\mathbf{F})$ to $[\Sigma_2]$. The map $\pi_1^{\trop}$ is finite-to-one. More specifically, given any cone $\cone(\tree_1)$ of $\M_{0,\bP}^{\trop}$, $(\pi_1^{trop})^{-1}(\cone(\tree_1))$ consists of finitely many cones of $\Htropcomb(\varphi)$. Each such cone is of the form $\cone(\Lambda)$, where $\Lambda=(\tree_1,\ldots)$. In terms of the generators $(\check{e})_{e\in \edges(\tree_1)}$ for $\cone(\Lambda)$ and $\edges(\tree_1)$ for $\cone(\tree_1)$, the map $\pi_1^{\trop}$ is diagonal with scaling factors $(\lcmdeg(e))$. This reflects the local geometry of the map $\pi_1:\Htildecomb(\varphi)\to\Mbar_{0,\bP}$ as follows: there are local coordinates $(x_e)$ on $\Htildecomb(\varphi)$ along the boundary stratum $\Hbar(\Lambda)$ and local coordinates $(y_e)$ on $\Mbar_{0,\bP}$ along $\stratum_{\tree_1}$ such that $\pi_1^{*}(y_e)=x_e^{\lcmdeg(e)}$.

\begin{rem}[Relationship with the boundary complex of admissible covers]\label{rem:truetropicalhurwitz}
    Unlike $\M_{0,\bP}^{\trop}$, which is the cone over the boundary complex of $\Mbar_{0,\bP}$, the tropical Hurwitz space $\Htropcomb(\varphi)$ is not the cone over the boundary complex of $\Htildecomb(\varphi)$. There is a possibly disconnected cone complex $\cH^{\trop}_{\mathrm{prof, true}}$, the ``true tropical Hurwitz space", that has a $k$-dimensional cone for every irreducible component of some codimension-$k$ boundary stratum of $\Htildecomb(\varphi)$. By \cite{CavalieriMarkwigRanganathan2016}, there is a finite-to-one map $\chi:\cH^{\trop}_{\mathrm{prof,true}}\to \Htropcomb(\varphi)$ whose restriction to every open cone is an isomorphism onto its image, but which might send distinct cones of $\cH^{\trop}_{\mathrm{prof,true}}$ to the same cone of $\Htropcomb(\varphi)$. 
\end{rem}

\section{The tropical moduli space correspondence and Thurston's pullback map}\label{sec:tropicalcorrespondence}

 There is a closed sub-cone-complex (of full dimension) $\Htrop(\varphi)\subset\Htropcomb(\varphi)$ consisting of tropical admissible covers whose combinatorial types also have topological type $\varphi$. However, as explained in Remark \ref{rem:topologicaltype} it is not well-understood how to in general enumerate the cones of, and to explicitly describe  $\Htrop(\varphi)$. We will refer to pair of maps $\pi_1^{\trop}, \pi_2^{\trop}:\Htrop(\varphi)\to \M_{0,\bP}^{\trop}$ as well as to the multivalued map $$\MSC_{\varphi}^{\trop}:=(\pi_2^{\trop})|_{\Htrop(\varphi)}\circ(\pi_1^{\trop})|_{\Htrop(\varphi)}^{-1}:\M_{0,\bP}^{\trop}\to \M_{0,\bP}^{\trop}$$ as the \textit{naive tropical moduli space correspondence}. 

\begin{rem}\label{rem:truetropicalcorrespondence}
    The ``true" tropical Hurwitz space $\cH^{\trop}_{\mathrm{prof,true}}$ (Remark \ref{rem:truetropicalhurwitz}) has a connected component $\cH^{\trop}_{\mathrm{true}}:=\chi^{-1}(\Htildecomb(\varphi))$ that is the cone over the boundary complex of $\Htilde(\varphi)$. The pair of maps $\pi_1^{\trop}\circ\chi, \pi_2^{\trop}\circ\chi:\Htrop_{\mathrm{true}}(\varphi)\to \M_{0,\bP}^{\trop}$ ans well as the multivalued map $\pi_2^{\trop}\circ\chi\circ((\pi_1^{\trop}\circ\chi)|_{\Htrop_{\mathrm{true}}(\varphi)})^{-1}$ ought to be thought of as the ``true" tropical moduli space correspondence. Given that the difference between $\cH^{\trop}_{\mathrm{true}}$ and $\Htilde(\varphi)$ lies purely in the multiplicity of occurrence of cones, the difference between the true and the naive tropical moduli space correspondence also lies purely in multiplicities. If the global dynamics of the tropical moduli space correspondence are to be studied in any substantive way, it is absolutely necessary to get the multiplicities correct. This study therefore ought to be carried out in the context of the true correspondence -- we believe that this is the object that best captures the algebraic dynamics ``near infinity" of the moduli space correspondence on $\M_{0,\bP}$. 
    
    In the case that $\varphi$ has topological degree $2$, the situation is particularly good. The Hurwitz space $\Hcomb(\varphi)$ is connected, so in fact $\cH(\varphi)=\Hcomb(\varphi)$. The compactifications $\Hbarcomb(\varphi)$, $\Hbar(\varphi)$, $\Htildecomb(\varphi)$, and $\Htilde(\varphi)$ all coincide, and have irreducible boundary strata. This implies that the tropical spaces $\Htrop(\varphi)$, $\Htropcomb(\varphi)$, $\Htrop_{\mathrm{true}}(\varphi)$, and $\cH^{\trop}_{\mathrm{prof, true}}$ coincide as well, and can be concretely described. In the context that $\varphi$ is degree-$2$, one is comparatively well-placed to study the dynamics if the true moduli space correspondence. 
\end{rem}

\subsection{Tropical admissible covers from weighted multicurves}

In this section, we establish that the relationship between the tropical moduli space correspondence and the tropical Thurston pullback map exactly mirrors the relationship between the algebraic moduli space correspondence and the holomorphic Thurston pulllback map. First, we show how to obtain a combinatorial type of admissible cover given a multicurve on $S^2\setminus \bP$. 
\begin{Def}\label{def:combinatorialtypefrommulticurve}
    Suppose that $\Gamma=\{\gamma_1,\ldots,\gamma_k\}$ is a multicurve on $S^2\setminus\bP$. We obtain a combinatorial type $$\Lambda_{\varphi}(\Gamma)=(\tree_1,\tree_2,F,\edgedeg, \varphi|_{\varphi^{-1}(\bP)}, \legdeg)$$ of admissible cover from $\Gamma$ and $\varphi$ as follows. Set $\tree_1=\tree(\Gamma)$ to be its marked dual tree as defined in Section \ref{sec:tropicalteichmuller}. Set $\Gamma_2:=\widetilde{\varphi}^{*}(\Gamma)$ to be its pullback (a multicurve on $S^2\setminus\varphi^{-1}(\bP)$), and set $\tree_2$ to be the $\varphi^{-1}(\bP)$-marked dual tree of $\Gamma_2$. Set $F:\underline{\tree_2}\to\underline{\tree_1}$ to be the graph homomorphism determined by $\varphi$ in the natural way, specifically:
    \begin{itemize}
        \item For $v$ a vertex of $\tree_2$ corresponding to connected component $Y$ of $S^2\setminus(\gamma)_{\gamma\in\Gamma_2}$, set $F(v)$ to be the vertex of $\tree_1$ corresponding to the connected component $\varphi(Y)$ of $S^2\setminus(\gamma)_{\gamma\in\Gamma}$. 
        \item For $e$ an edge of $\tree_2$ corresponding to simple closed curve $\gamma\in\Gamma_2$, set $F(e)$ to be the edge of $\tree_1$ corresponding to simple closed curve $\varphi(\gamma)\in \Gamma$. 
    \end{itemize}
Set $\edgedeg:\edges(\tree_2)\to \Z_{>0}$ be the map that sends the edge $e$ of $\tree_2$ corresponding to $\gamma\in\Gamma_2$ to $\deg(\varphi|_{\gamma})$. As explained in Section \ref{sec:hurwitzboundarystratification}, $\varphi|_{\varphi^{-1}(\bP)}$ and $\legdeg$ are determined by just $\varphi$.
\end{Def}


\begin{lem}[\cite{HinichVaintrobAugmented, selinger2012thurston}]\label{lem:combinatorialtypefrommulticurve}
    For any multicurve $\Gamma$, the combinatorial type $\Lambda_{\varphi}(\Gamma)$ in Definition \ref{def:combinatorialtypefrommulticurve} has topological type $\varphi$.
\end{lem}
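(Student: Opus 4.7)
The plan is to use the augmented Teichm\"uller space and the continuous extension $\nu_\varphi:\Tbar(S^2,\bP)\to\Hbar(\varphi)$ of Hinich-Vaintrob to directly exhibit an admissible cover of topological type $\varphi$ whose combinatorial type equals $\Lambda_\varphi(\Gamma)$.

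First, I would pick any point $x=[C,\iota,\psi]\in\Tbar(S^2,\bP)$ whose associated multicurve (the set of simple closed curves contracted by $\psi$) is precisely $\Gamma$. Such an $x$ exists because the boundary strata of $\Tbar(S^2,\bP)$ are indexed by multicurves (Section \ref{sec:augteich}): starting from any choice of pairwise disjoint representatives of $\Gamma$, one can cut $S^2$ along $\Gamma$, collapse each curve to a node, and equip the resulting nodal topological sphere with a complex structure to obtain $C$, $\iota$ and $\psi$. By construction, the dual tree of $C$ is $\tree(\Gamma)=\tree_1$ in Definition \ref{def:combinatorialtypefrommulticurve}.

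Next, I would apply Selinger's continuous extension $\widetilde{\TP}_\varphi$ to augmented Teichm\"uller space (Section \ref{sec:augmentedpullback}). The image $\widetilde{\TP}_\varphi(x)=[C',\iota',\psi']\in\Tbar(S^2,\varphi^{-1}(\bP))$ has the property that the multicurve contracted by $\psi'$ is exactly $\widetilde{\varphi}^*(\Gamma)=\Gamma_2$: indeed, pulling back a degenerating complex structure along $\varphi$ degenerates precisely along preimages of the pinched curves, and this is the defining content of Selinger's extension. Thus the dual tree of $C'$ is $\tree_2$. Moreover, the pullback of complex structure along $\varphi$ produces a holomorphic map $f:C'\to C$ that, away from nodes, is identified with $\varphi$ via $\psi$ and $\psi'$; hence $f$ sends the component corresponding to a vertex $v$ of $\tree_2$ to the component corresponding to the vertex of $\tree_1$ determined by which component of $S^2\setminus\Gamma$ contains $\varphi$ of the corresponding component of $S^2\setminus\Gamma_2$. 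This is exactly the combinatorial map $F$, and the local degree of $f$ at the node corresponding to an edge $e$ of $\tree_2$ is the degree of $\varphi$ restricted to the curve $\gamma\in\Gamma_2$ representing $e$, which is precisely $\edgedeg(e)$. The marking data and leg-degrees agree with those of $\varphi$ tautologically. Therefore $f$ is an admissible cover with combinatorial type $\Lambda_\varphi(\Gamma)$.

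Finally, I would invoke that $\nu_\varphi$ extends continuously to a map $\Tbar(S^2,\bP)\to\Hbar(\varphi)$ by Hinich-Vaintrob (Section \ref{sec:compactifiedcorrespondence}), and that $\nu_\varphi(x)$ is represented by the very admissible cover $f$ constructed above (this is essentially the definition of the extension, matching the fact that $\pi_2\circ\nu_\varphi=\rho\circ\TP_\varphi$ at interior points and extending by continuity). Since $\nu_\varphi(x)\in\Hbar(\varphi)$, the admissible cover $f$ lies in $\Hbar(\varphi)$, and so by definition (Section \ref{sec:admissiblecovers}) its combinatorial type $\Lambda_\varphi(\Gamma)$ has topological type $\varphi$, as required.

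The main obstacle is the second step: one must verify carefully that Selinger's extension of $\TP_\varphi$ to the boundary of augmented Teichm\"uller space really does contract precisely $\widetilde{\varphi}^*(\Gamma)$ and produce a map with the expected vertex, edge, and leg data. Fortunately this identification is the content of the cited works of Selinger and Hinich-Vaintrob, so the proof can proceed by invoking those results and checking the bookkeeping matches Definition \ref{def:combinatorialtypefrommulticurve}.
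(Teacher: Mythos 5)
Your proposal is correct and follows essentially the same route as the paper: pick a boundary point of augmented Teichm\"uller space whose contracted multicurve is $\Gamma$, use the Selinger/Hinich--Vaintrob description of the extended pullback $\widetilde{\TP}_\varphi$ and $\nu_\varphi$ to produce an admissible cover with combinatorial type $\Lambda_\varphi(\Gamma)$, and conclude from the fact that $\nu_\varphi$ lands in $\Hbar(\varphi)$. The paper orders the argument slightly differently (describing the extension first, then noting any $\Gamma$ arises as a contracted multicurve), but the key ingredients and logic are identical.
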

\begin{proof}
    By \cite{Koch2013} the map $\nu_{\varphi}:\cT(S^2,\bP)\to \cH(\varphi)$ can be described in terms of the pullback map $\widetilde{\TP}_{\varphi}$ as follows. Given $[\psi]\in \cT(S^2,\bP)$, under the pair of complex structures $[\psi]$ and $\widetilde{\TP}_{\varphi}([\psi])$ on $(S^2,\bP)$ and $(S^2,\varphi^{-1}(\bP))$ respectively, the branched cover $\varphi$ is identified with a holomorphic map $f$; $\nu_{\varphi}$ sends $[\psi]$ to $[f]$.
    
    The extension of $\widetilde{\TP}_{\varphi}$ to $\Tbar(S^2,\bP)$ constructed in \cite{selinger2012thurston} as well as the extension of $\nu_{\varphi}:\Tbar(S^2,\bP)\to\Hbar(\varphi)$ constructed in \cite{HinichVaintrobAugmented} can be described as follows. Given $[C,\iota,\psi]\in\Tbar(S^2,\bP)$, set $\Gamma$ to be the multicurve on $S^2\setminus \bP$ consisting of curves contracted by $\psi:S^2\to C$, set $\Gamma_2:=\widetilde{\varphi}^{*}(\Gamma)$ to be its pullback (a multicurve on $S^2\setminus\varphi^{-1}(\bP)$), set $X_2$ to be the $\varphi^{-1}(\bP)$-marked topological surface obtained from $S^2$ by contracting to a point each curve in $\Gamma_2$ and set $\psi_2:S^2\to X_2$ to be the quotient map. The branched cover $\varphi$ descends to a branched cover $\underline{\varphi}:X_2\to C$. One can lift the complex structure on $C$ along $\underline{\varphi}$ to obtain an identification $X_2\cong C_2$, where $C_2$ is a $\varphi^{-1}(\bP)$-marked stable curve. We set $\widetilde{\TP}_{\varphi}(C,\iota,\psi)=(C_2, (\psi_2)|_{\varphi^{-1}(\bP)}, \psi_2)\in \Tbar(S^2,\varphi^{-1}(\bP))$. By construction, under the identification $X_2\cong C_2$, the branched cover $\underline{\varphi}:C_2\to C$ is an admissible cover $f$ of profile $\varphi$. We set $\nu_{\varphi}([C,\iota,\psi])=[f]\in \Hbar(\varphi)$. The combinatorial type of $f$ is clearly $\Lambda_{\varphi}(\Gamma)$, and since $[f]\in \Hbar(\varphi)$, $\Lambda_{\varphi}(\Gamma)$ has topological type $\varphi$. Finally we note that any multicurve $\Gamma$ is exactly the contracted multicurve for some $[C,\iota,\psi]\in\Tbar(S^2,\bP)$ -- the lemma follows.  
\end{proof}

\begin{Def}[Definition-Lemma]\label{def:tropicalteichtohurwitz}
    We define a map $\nu_{\varphi}^{\trop}:\ConeCC(S^2,\bP)\to \Htrop(\varphi)$ defined on cones as follows. Given a multicurve $\Gamma$ on $S^2\setminus\bP$, let $\Lambda_{\varphi}(\Gamma)=(\tree_1,\tree_2,F,\edgedeg, \varphi|_{\varphi^{-1}(\bP)}, \legdeg)$ be the combinatorial type of admissible cover as in Definition \ref{def:combinatorialtypefrommulticurve}. Recall that $\tree_1=\tree(\Gamma)$ has an edge $e([\gamma])$ for every $[\gamma]\in\Gamma$. Given $\sum_{[\gamma]\in\Gamma}a_{[\gamma]}[\gamma]\in\cone(\Gamma)\subset\ConeCC(S^2,\bP)$, we obtain a possibly degenerate metrization of $\tree_1$ by assigning length $a_{[\gamma]}$ to $e([\gamma])$, yielding a tropical curve $\Sigma_1$. This tropical curve is the target curve of a unique tropical admissible cover $(\Sigma_1,\Sigma_2,\mathbf{F})\in\cone(\Lambda_{\varphi}(\Gamma))$. We set $\nu_{\varphi}^{\trop}(\sum_{[\gamma]\in\Gamma}a_{[\gamma]}[\gamma])=(\Sigma_1,\Sigma_2,\mathbf{F})$. Note that $\nu_{\varphi}^{\trop}$ is well-defined and piecewise linear, and its restriction to every cone is linear.  
\end{Def}

\begin{lem}\label{lem:nutropsurjective}
    The image of $\nu_{\varphi}^{\trop}:\ConeCC(S^2,\bP)\to \Htropcomb(\varphi)$ is exactly $\Htrop(\varphi)$. 
\end{lem}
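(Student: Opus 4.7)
The plan is to prove containment in both directions. The forward inclusion $\nu_{\varphi}^{\trop}(\ConeCC(S^2,\bP))\subseteq\Htrop(\varphi)$ is immediate from the cone-by-cone construction in Definition \ref{def:tropicalteichtohurwitz} combined with Lemma \ref{lem:combinatorialtypefrommulticurve}: for any multicurve $\Gamma$, the image of $\cone(\Gamma)$ lies in $\cone(\Lambda_{\varphi}(\Gamma))$, and the latter is a cone of $\Htrop(\varphi)$ precisely because $\Lambda_{\varphi}(\Gamma)$ has topological type $\varphi$.

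For the reverse inclusion, it suffices to show that for every realizeable combinatorial type $\Lambda$ of topological type $\varphi$, the cone $\cone(\Lambda)$ is contained in the image. I would split this into two steps. First, produce a multicurve $\Gamma$ on $S^2\setminus\bP$ with $\Lambda_{\varphi}(\Gamma)=\Lambda$. Second, check that the restriction $\nu_{\varphi}^{\trop}|_{\cone(\Gamma)}$ surjects onto $\cone(\Lambda)$.

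For the first step I would invoke the surjective continuous extension $\nu_{\varphi}:\Tbar(S^2,\bP)\to\Hbar(\varphi)$ of \cite{HinichVaintrobAugmented} recalled in Section \ref{sec:compactifiedcorrespondence}. Since $\Lambda$ has topological type $\varphi$, there exists by definition an admissible cover $[f]\in\Hbar(\varphi)$ with combinatorial type $\Lambda$. Picking any preimage $[C,\iota,\psi]\in\Tbar(S^2,\bP)$ under $\nu_{\varphi}$, and letting $\Gamma$ be the multicurve on $S^2\setminus\bP$ consisting of simple closed curves contracted by $\psi$, the explicit description of $\nu_{\varphi}$ reviewed in the proof of Lemma \ref{lem:combinatorialtypefrommulticurve} shows that the combinatorial type of $f$, and hence $\Lambda$, equals $\Lambda_{\varphi}(\Gamma)$. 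For the second step, by construction $\nu_{\varphi}^{\trop}|_{\cone(\Gamma)}$ sends a weight assignment $\sum_{[\gamma]\in\Gamma}a_{[\gamma]}[\gamma]$ to the unique tropical admissible cover in $\cone(\Lambda_{\varphi}(\Gamma))$ whose target $\Sigma_1$ has edge length $a_{[\gamma]}$ on $e([\gamma])$. Since every possibly degenerate metrization of $\tree(\Gamma)$ arises this way, and since such a metrization determines a unique element of $\cone(\Lambda)$, the restriction is a bijection onto $\cone(\Lambda)$.

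The sole non-tautological ingredient is the surjectivity of the extended map $\nu_{\varphi}$ on augmented Teichm\"uller space, which is imported from the literature. With that input, everything else is a direct unpacking of the definitions of $\nu_{\varphi}^{\trop}$, of $\Lambda_{\varphi}(\Gamma)$, and of the cones of $\Htropcomb(\varphi)$; I do not anticipate any significant technical obstacle.
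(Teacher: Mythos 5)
Your proposal is correct and follows essentially the same route as the paper: the reverse inclusion is obtained by invoking the surjectivity of $\nu_{\varphi}:\Tbar(S^2,\bP)\to\Hbar(\varphi)$ from \cite{HinichVaintrobAugmented} to realize every topological-type-$\varphi$ combinatorial type as $\Lambda_{\varphi}(\Gamma)$, and then by observing that $\nu_{\varphi}^{\trop}|_{\cone(\Gamma)}$ is a bijection onto $\cone(\Lambda_{\varphi}(\Gamma))$. You have simply spelled out the forward inclusion and the bijectivity check in more detail than the paper's two-sentence proof.
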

\begin{proof}
By \cite{HinichVaintrobAugmented}, $\nu_{\varphi}:\Tbar(S^2,\bP)\to\Hbar(\varphi)$ is surjective, so every combinatorial type of admissible cover $\Lambda$ that has the topological type of $\varphi$ arises as $\Lambda_{\varphi}(\Gamma)$ for some multicurve $\Gamma$. Finally, we note that $\nu_{\varphi}^{\trop}:\cone(\Gamma)\to\cone(\Lambda_{\varphi}(\Gamma))$ is a bijection. 
\end{proof}

\begin{prop}\label{prop:tropicalcorrespondencefrompullback}
    We have 
    \begin{enumerate}
     \item $\rho^{\trop}=\pi_1^{\trop}\circ\nu_{\varphi}^{\trop}$,\label{it:left}
    \item $\widetilde{\rho}^{\trop}\circ\widetilde{\TP}_{\varphi}^{\trop}=\widetilde{\pi}_2^{\trop}\circ\nu_{\varphi}^{\trop}$,\label{it:rightlift}
    \item $\rho^{\trop}\circ\TP_{\varphi}^{\trop}=\pi_2^{\trop}\circ\nu_{\varphi}^{\trop}.$\label{it:right}
    \end{enumerate}
\end{prop}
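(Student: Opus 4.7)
All three equalities are between piecewise-linear maps that are linear on each cone $\cone(\Gamma)\subset\ConeCC(S^2,\bP)$, so the plan is to fix a multicurve $\Gamma$ on $S^2\setminus\bP$, identify the target cones on both sides using the combinatorial type $\Lambda_{\varphi}(\Gamma)=(\tree_1,\tree_2,F,\edgedeg,\ldots)$ from Definition~\ref{def:combinatorialtypefrommulticurve}, and compare the two linear maps by evaluating on each generator $[\gamma]$ of $\cone(\Gamma)$.

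For part~\eqref{it:left}, I would unwind Definition~\ref{def:tropicalteichtohurwitz}: the image $\nu_{\varphi}^{\trop}(\sum a_{[\gamma]}[\gamma])$ is by construction the unique tropical admissible cover whose target tropical curve $\Sigma_1$ has underlying tree $\tree(\Gamma)$ with edge lengths $a_{[\gamma]}$ at $e([\gamma])$, while $\rho^{\trop}(\sum a_{[\gamma]}[\gamma])$ is tautologically this same tropical curve. The one subtlety is that the cone coordinate $\check{e}$ on $\cone(\Lambda_{\varphi}(\Gamma))$ differs from the edge length of $\Sigma_1$ by the factor $\lcmdeg(e)$, but this factor is absorbed precisely when $\pi_1^{\trop}$ is applied, by the formula at the end of Section~\ref{sec:tropicalhurwitz}. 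So \eqref{it:left} reduces to a definition-chase.

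Part~\eqref{it:rightlift} is the substantive step. On the left, applying formula~\eqref{eq:TropicalThurstonPullback1} and then $\widetilde{\rho}^{\trop}$ yields a tropical curve on $\tree_2=\tree(\widetilde{\varphi}^{*}(\Gamma))$ whose edge $e'([\gamma'])$ has length $a_{[\varphi(\gamma')]}/\deg(\varphi|_{\gamma'})$. On the right, I would use the length formula for tropical admissible covers recalled in Section~\ref{sec:tropicalhurwitz}: the edge $e'$ of $\Sigma_2$ with $F(e')=e([\gamma])$ is assigned length $(\lcmdeg(e([\gamma]))/\edgedeg(e'))\cdot\check{e}([\gamma])$. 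Substituting $\check{e}([\gamma])=a_{[\gamma]}/\lcmdeg(e([\gamma]))$ and using $\edgedeg(e'([\gamma']))=\deg(\varphi|_{\gamma'})$ from Definition~\ref{def:combinatorialtypefrommulticurve} collapses this to $a_{[\varphi(\gamma')]}/\deg(\varphi|_{\gamma'})$, matching the left-hand side. I expect the main obstacle to be exactly this bookkeeping of $\lcmdeg$ and $\edgedeg$ factors; once the conventions are pinned down, matching is term-by-term.

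For part~\eqref{it:right}, I would derive it formally from~\eqref{it:rightlift} using $\TP_{\varphi}^{\trop}=\widetilde{\mu}^{\trop}\circ\widetilde{\TP}_{\varphi}^{\trop}$ and $\pi_2^{\trop}=\mu^{\trop}\circ\widetilde{\pi}_2^{\trop}$; what remains is the identity $\rho^{\trop}\circ\widetilde{\mu}^{\trop}=\mu^{\trop}\circ\widetilde{\rho}^{\trop}$ on $\ConeCC(S^2,\varphi^{-1}(\bP))$. This last identity I would verify cone by cone: on $\cone(\Gamma')$ both sides first discard the weights on curves of $\Gamma'$ that become non-essential or peripheral on $S^2\setminus\bP$, and the resulting $\bP$-marked tropical curve is in both cases obtained from the dual tree $\tree(\Gamma')$ by forgetting the marks in $\varphi^{-1}(\bP)\setminus\bP$ and stabilizing.
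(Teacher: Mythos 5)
Your proposal follows the paper's own proof closely: part (1) is a definition chase, part (2) is verified cone-by-cone by matching the edge lengths of $\Sigma_2$ on each generator, and part (3) is derived formally from (2) using the factorizations $\TP_{\varphi}^{\trop}=\widetilde{\mu}^{\trop}\circ\widetilde{\TP}_{\varphi}^{\trop}$ and $\pi_2^{\trop}=\mu^{\trop}\circ\widetilde{\pi}_2^{\trop}$ together with the commutation $\mu^{\trop}\circ\widetilde{\rho}^{\trop}=\rho^{\trop}\circ\widetilde{\mu}^{\trop}$. The only minor differences are that in (2) you route the bookkeeping through the $\lcmdeg$ cone-coordinate normalization and let those factors cancel, whereas the paper reads off the edge lengths of $\Sigma_2$ directly from the slope condition $\slope(\mathbf{F}|_{e'})=\edgedeg(e')$ --- the two are equivalent --- and that you sketch a justification for the commutation identity in (3), which the paper uses without comment; there your phrase ``and stabilizing'' is doing real work, since the convex hull $\mu^{\trop}(\Sigma)$ can have valence-two vertices that must be suppressed before it matches $\rho^{\trop}\circ\widetilde{\mu}^{\trop}$.
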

\begin{rem} Proposition \ref{prop:tropicalcorrespondencefrompullback} says that the tropical moduli space correspondence descends from the tropical Thurston pullback map on weighted multicurves, in the following sense: Parts \eqref{it:left} and \eqref{it:right} of the proposition together imply that Diagram \eqref{diag:tropical} below commutes. This is the third --- tropical --- companion of Diagram \eqref{diag:noncompact} in Section \ref{sec:modulicorrespondence} and its compactified/augmented Diagram \eqref{diag:compact} in Section \ref{sec:compactifiedcorrespondence}. 

\begin{equation}\label{diag:tropical}
  \begin{tikzpicture}
    \matrix(m)[matrix of math nodes,row sep=3em,column sep=8em,minimum
    width=2em] {
      \ConeCC(S^2,\bP) &&\ConeCC(S^2,\bP)\\
      &\Htrop(\varphi)&\\
      \M^{\trop}_{0,\mathbf{P}}&&\M^{\trop}_{0,\mathbf{P}}\\
    };
    \path[-stealth] (m-1-1) edge node [above] {$\TP_{\varphi}^{\trop}$} (m-1-3);
    \path[-stealth] (m-1-1) edge node [above right] {$\nu_{\varphi}^{\trop}$} (m-2-2);
    \path[-stealth] (m-1-1) edge node [left] {$\rho^{\trop}$} (m-3-1);
    \path[-stealth] (m-1-3) edge node [left] {$\rho^{\trop}$} (m-3-3);
    \path[-stealth] (m-2-2) edge node [above left]
    {$\pi_1^{\trop}$} (m-3-1);
    \path[-stealth] (m-2-2) edge node [above right] {$\pi_2^{\trop}$} (m-3-3);
  \end{tikzpicture}
\end{equation}
\end{rem}
\begin{proof}[Proof of Proposition \ref{prop:tropicalcorrespondencefrompullback}]
    The first equality \eqref{it:left} follows directly from the definition (Definition \ref{def:tropicalteichtohurwitz}) of $\nu_{\varphi}^{\trop}$ together with the definition of $\rho^{\trop}$ given in Section \ref{sec:tropicalteichmuller}. For \eqref{it:rightlift}, suppose that $\Gamma$ is any multicurve on $S^2\setminus\bP$, and $\sum_{[\gamma]\in\Gamma}a_{[\gamma]}[\gamma]\in\cone^{\circ}(\Gamma)$. Set $$\Lambda_{\varphi}(\Gamma)=(\tree_1,\tree_2,F,\edgedeg, \varphi|_{\varphi^{-1}(\bP)}, \legdeg)$$ to be the combinatorial type of admissible cover as in Definition \ref{def:combinatorialtypefrommulticurve}. Recall that $\tree_1=\tree(\Gamma)$, so the edges of $\tree_1$ are in bijection with simple closed curves $[\gamma]\in \Gamma$. Also $\tree_2=\tree(\widetilde{\varphi}^{*}(\Gamma))$, so the edges of $\tree_2$ are in bijection with simple closed curves $[\gamma']$ that map to $[\gamma]\in \Gamma$. For any $[\gamma'_0]\in \widetilde{\varphi}^{*}(\Gamma)$ mapping via $\varphi$ to $[\gamma_0]$ with degree $d_{\gamma_0'}$. By \eqref{eq:TropicalThurstonPullback1}, the coefficient of $[\gamma']$ in $\widetilde{\TP}_{\varphi}^{\trop}(\sum_{[\gamma]\in\Gamma}a_{[\gamma]}[\gamma]\in\cone(\Gamma))$ is $\frac{a_{[\gamma_0]}}{d_{\gamma'_0}}$. 
    
    To understand $\widetilde{\pi}_2^{\trop}\circ\nu_{\varphi}^{\trop}(\sum_{[\gamma]\in\Gamma}a_{[\gamma]}[\gamma])$, we observe that first $\nu_{\varphi}^{\trop}(\sum_{[\gamma]\in\Gamma}a_{[\gamma]}[\gamma])$ produces a tropical admissible cover 
$(\Sigma_1,\Sigma_2,\mathbf{F})$ with combinatorial type $\Lambda_{\varphi}(\Gamma)$, then, applying $\widetilde{\pi_2}^{\trop}$, we record the tropical curve $\Sigma_2$, which has underlying tree $\tree_2$. For any $[\gamma_0]\in\Gamma$, the length, on the tropical curve $\Sigma_1$, of the edge $e([\gamma_0])$ of its underlying tree $\tree_1$, is $a_{[\gamma_0]}$. For any $[\gamma'_0]\in \widetilde{\varphi}^{*}(\Gamma)$ mapping via $\varphi$ to $[\gamma_0]$ with degree $d_{\gamma_0'}$, the slope of $\mathbf{F}$ along $e([\gamma'])$ equals $d_{[\gamma']}$, so the length of $e([\gamma'])$ on the tropical curve $\Sigma_2$ equals $\frac{a_{[\gamma_0]}}{d_{\gamma'_0}}$.

To understand $\widetilde{\rho}^{\trop}\circ\widetilde{\TP}_{\varphi}^{\trop}(\sum_{[\gamma]\in\Gamma}a_{[\gamma]}[\gamma])$, we note that $\widetilde{\TP}_{\varphi}^{\trop}(\sum_{[\gamma]\in\Gamma}a_{[\gamma]}[\gamma])$ produces a non-negative weighting on the multicurve $\widetilde{\varphi}^{*}(\Gamma)$, then, applying $\widetilde{\rho}^{\trop}$, we obtain a tropical curve $\Sigma_2'$ with underlying tree $\tree_2=\tree(\widetilde{\varphi}^{*}(\Gamma))$, and edge-lengths corresponding to the weights in $\widetilde{\TP}_{\varphi}^{\trop}(\sum_{[\gamma]\in\Gamma}a_{[\gamma]}[\gamma])$. By \eqref{eq:TropicalThurstonPullback1}, for any $[\gamma'_0]\in \widetilde{\varphi}^{*}(\Gamma)$ mapping via $\varphi$ to $[\gamma_0]$ with degree $d_{\gamma_0'}$, the coefficient of $[\gamma']$ in $\widetilde{\TP}_{\varphi}^{\trop}(\sum_{[\gamma]\in\Gamma}a_{[\gamma]}[\gamma]\in\cone(\Gamma))$ is $\frac{a_{[\gamma_0]}}{d_{\gamma'_0}}$. So the length of $e([\gamma'])$ on the tropical curve $\Sigma_2'$ equals $\frac{a_{[\gamma_0]}}{d_{\gamma'_0}}$.

We conclude that $\Sigma_1=\Sigma_2$, and so $\widetilde{\pi}_2^{\trop}\circ\nu_{\varphi}^{\trop}(\sum_{[\gamma]\in\Gamma}a_{[\gamma]}[\gamma])=\widetilde{\rho}^{\trop}\circ\widetilde{\TP}_{\varphi}^{\trop}(\sum_{[\gamma]\in\Gamma}a_{[\gamma]}[\gamma])$.

For the final equality \eqref{it:right}, observe that since $\pi_2^{\trop}=\mu^{\trop}\circ\widetilde{\pi}_2^{\trop}$, we can write 
\begin{align*}
    \pi_2^{\trop}\circ\nu_{\varphi}^{\trop}&=\mu^{\trop}\circ\widetilde{\pi}_2^{\trop}\circ\nu_{\varphi}^{\trop}\\
    &=\mu^{\trop}\circ\widetilde{\rho}^{\trop}\circ\widetilde{\TP}_{\varphi}^{\trop}\\
    &=\rho^{\trop}\circ\widetilde{\mu}^{\trop}\circ\widetilde{\TP}_{\varphi}^{\trop}\\
    &= \rho^{\trop}\circ\TP^{\trop}_{\varphi}
\end{align*}
The second equality follows from Part \eqref{it:rightlift} of this Lemma. 
\end{proof}

\subsection{Local properties of the tropical moduli space correspondence}
Picking a locally-defined inverse branch of $\pi_1:\cH(\varphi)\to \M_{0,\bP}$ gives us a locally-defined single-valued branch of the multivalued map $\MSC_{\varphi}=(\pi_2)|_{\cH(\varphi)}\circ(\pi_1)|_{cH(\varphi)}$ on $\M_{0,\bP}$. This local single-valued branch has exactly the same local properties as the pullback map $\TP_{\varphi}$ on $\cT(S^2,\bP)$, for example the derivative at a fixed point. There is a tropical analog as follows: $(\pi_1^{\trop})_{\Htrop(\varphi)}$ is finite-to-one and can be thought of as a branched cover. Choosing a cone $\cone(\Lambda)$ of $\Htrop(\varphi)$ yields an inverse branch $(\pi_1^{\trop}|_{\cone(\Lambda)})^{-1}$ of $\pi_1$, defined on $\pi_1(\cone(\Lambda))$, which is a cone of $\M_{0,\bP}^{\trop}$. In fact, if $\Lambda=(\tree_1,\tree_2,\ldots)$, then the image, under $\pi_1^{\trop}(\cone(\Lambda))=\cone(\tree_1)$ and $\pi_2^{\trop}(\cone(\Lambda))\subset \cone(\mu_*(\tree_2))$. Then $$\MSC_{\varphi,\Lambda}^{\trop}:=(\pi_2)|_{\cone(\Lambda)}\circ (\pi_1^{\trop}|_{\cone(\Lambda)})^{-1}:\cone(\tree_1)\to\cone(\mu_*(\tree_2))$$ is the single-valued linear \textit{branch associated to $\Lambda$} of the tropical moduli space correspondence $\MSC_{\varphi}^{\trop}$. If $\cone(\Lambda)$ is a maximal cone of $\Htrop(\varphi)$, then we will refer to $\MSC_{\varphi,\Lambda}^{\trop}$ as a \textit{local} branch of $\MSC_{\varphi}^{\trop}$. Proposition \ref{prop:localtropicalcorrespondence} says that every branch of $\MSC_{\varphi}^{\trop}$ is given by some Thurston linear transformation (Section \ref{sec:tropicalpullback} Equation \eqref{eq:TropicalThurstonPullback1}). 

\begin{prop}\label{prop:localtropicalcorrespondence}
    Suppose $\cone(\Lambda)$ is any cone of $\Htrop(\varphi)$, that $\Lambda=\Lambda_{\varphi}(\Gamma)=(\tree_1,\tree_2,\ldots)$, and that $\Gamma$ is any multicurve on $S^2\setminus \bP$ such that $\Lambda=\Lambda_{\varphi}(\Gamma)$. Let $M_1$ be the matrix of the branch $\MSC_{\varphi,\Lambda_{\varphi}(\Gamma)}^{\trop}$, written with respect to the natural generators $\edges(\tree_1)$ of $\cone(\tree_1)$ and $\edges(\mu_*(\tree_2))$ of $\cone(\mu_*(\tree_2))$. Let $M_2$ be the matrix of Thurston linear transformation $\TLT_{\varphi,\Gamma}$(\eqref{eq:TropicalThurstonPullback1}), written with respect to the natural bases of $\R^{\Gamma}$ and $\R^{\varphi^*(\Gamma)}$. Then $M_1=M_2$. 
\end{prop}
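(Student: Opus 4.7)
The plan is to deduce the proposition by pure diagram chasing in the commutative square \eqref{diag:tropical} established in Proposition \ref{prop:tropicalcorrespondencefrompullback}. The key observation is that the map $\rho^{\trop}$, restricted to a single cone $\cone(\Gamma)\subset\ConeCC(S^2,\bP)$, is the canonical linear bijection onto $\cone(\tree(\Gamma))$ sending the generator $[\gamma]$ to the generator $e([\gamma])$ for each $[\gamma]\in\Gamma$; in particular it is the identity matrix in natural bases, and conjugating by it preserves matrices.

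First I would restrict every map in \eqref{diag:tropical} to the cone $\cone(\Gamma)$. By Definition \ref{def:tropicalteichtohurwitz} (combined with Lemma \ref{lem:nutropsurjective}), $\nu_\varphi^{\trop}|_{\cone(\Gamma)}$ is a linear bijection onto $\cone(\Lambda_\varphi(\Gamma))$. From part \eqref{it:left} of Proposition \ref{prop:tropicalcorrespondencefrompullback}, $\rho^{\trop} = \pi_1^{\trop}\circ \nu_\varphi^{\trop}$, so the restriction $\pi_1^{\trop}|_{\cone(\Lambda_\varphi(\Gamma))}$ is also a bijection onto $\cone(\tree_1)$ and we can write
\[
\bigl(\pi_1^{\trop}|_{\cone(\Lambda_\varphi(\Gamma))}\bigr)^{-1}
= \nu_\varphi^{\trop}|_{\cone(\Gamma)} \circ \bigl(\rho^{\trop}|_{\cone(\Gamma)}\bigr)^{-1}.
\]
Composing with $\pi_2^{\trop}$ and invoking part \eqref{it:right} of Proposition \ref{prop:tropicalcorrespondencefrompullback}, namely $\pi_2^{\trop}\circ \nu_\varphi^{\trop} = \rho^{\trop}\circ \TP_\varphi^{\trop}$, gives
\[
\MSC^{\trop}_{\varphi,\Lambda_\varphi(\Gamma)}
= \rho^{\trop}\circ \TP_\varphi^{\trop}|_{\cone(\Gamma)} \circ \bigl(\rho^{\trop}|_{\cone(\Gamma)}\bigr)^{-1}.
\]
This exhibits the branch $\MSC^{\trop}_{\varphi,\Lambda_\varphi(\Gamma)}$ as a conjugate of $\TP_\varphi^{\trop}|_{\cone(\Gamma)}$.

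Next I would identify the bases. On the source, $\rho^{\trop}$ sends $[\gamma]\in\Gamma$ to $e([\gamma])\in\edges(\tree_1)$, matching the natural generators of $\cone(\Gamma)$ with those of $\cone(\tree_1)$. On the target, $\TP_\varphi^{\trop}$ maps $\cone(\Gamma)$ into $\cone(\varphi^*(\Gamma))$, whose natural generators are the elements of $\varphi^*(\Gamma)\subseteq \widetilde\varphi^*(\Gamma)$; under the $\mu_*$-contraction applied to $\tree_2 = \tree(\widetilde\varphi^*(\Gamma))$, the surviving edges of $\mu_*(\tree_2)$ are precisely those labeled by elements of $\varphi^*(\Gamma)$, so $\rho^{\trop}$ on this side is again the identification $[\gamma'']\mapsto e([\gamma''])$. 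Thus both vertical arrows in the conjugation are basis-preserving. Since by the definition of $\TP_\varphi^{\trop}$ in Section \ref{sec:tropicalpullback} the map $\TP_\varphi^{\trop}|_{\cone(\Gamma)}$ is exactly the non-negative restriction of the Thurston linear transformation $\TLT_{\varphi,\Gamma}$, its matrix in these multicurve bases is $M_2$; conjugation by a basis-preserving isomorphism yields $M_1 = M_2$.

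I expect the only real obstacle to be a careful verification that the canonical bijection between $\edges(\mu_*(\tree_2))$ and $\varphi^*(\Gamma)$ commutes with the $\rho^{\trop}$-identifications on the two sides of the square—i.e., tracking that the edge of $\mu_*(\tree_2)$ corresponding to a surviving $[\gamma'']\in\varphi^*(\Gamma)$ is precisely $e([\gamma''])$. This is essentially a definitional unwinding of Section \ref{sec:tropicalteichmuller} and Definition \ref{def:combinatorialtypefrommulticurve}, but it is the only place where notation can become treacherous; once it is done, the matrix equality follows formally.
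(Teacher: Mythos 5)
Your proof is correct and is essentially the paper's argument: both restrict Diagram \eqref{diag:tropical} to the cones $\cone(\Gamma)$, $\cone(\Lambda_{\varphi}(\Gamma))$, and $\cone(\varphi^*(\Gamma))$ and exhibit the branch $\MSC^{\trop}_{\varphi,\Lambda_{\varphi}(\Gamma)}$ as the conjugate $\rho^{\trop}\circ\TP_{\varphi}^{\trop}\circ(\rho^{\trop})^{-1}$ by the basis-preserving isomorphisms $[\gamma]\mapsto e([\gamma])$. The point you flag at the end, that the $\rho^{\trop}$-identification matches edges of $\mu_*(\tree_2)$ with elements of $\varphi^*(\Gamma)$ via $e([\gamma''])\leftrightarrow[\gamma'']$, is exactly the definitional unwinding the paper records in its bulleted list of generator identifications before the diagram chase.
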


\begin{proof}
    Write $\Gamma=\{[\gamma_1],\ldots,[\gamma_r]\}$, which means that $\tree_1=\tree(\Gamma)$ has edges $e([\gamma_1],\ldots, e([\gamma_r])$.  Write $\varphi^{*}(\Gamma)=\{[\gamma'_1],\ldots, [\gamma'_s]\}$, which means that $\mu_*(\tree_2)=\tree(\varphi^{*}(\Gamma))$ has edges $e([\gamma'_1]),\ldots, e([\gamma'_s])$. Recall that 
    \begin{itemize}
    \item $\cone(\Gamma)\subset \ConeCC(S^2,\bP)$ is generated by $[\gamma_1],\ldots,[\gamma_r]$,
    \item  $\cone(\tree_1)$ is generated by $e([\gamma_1],\ldots, e([\gamma_r])$,
    \item $(\rho^{\trop})|_{\cone(\Gamma)}:\cone(\Gamma)\to \cone(\tree_1)$ is an isomorphism taking $[\gamma_i]$ to $e([\gamma_i])$,
    \item $\cone(\varphi^{*}(\Gamma))\subset \ConeCC(S^2,\bP)$ is generated by $[\gamma'_1],\ldots,[\gamma'_s]$,
    \item  $\cone(\mu_*(\tree_2))$ is generated by $e([\gamma'_1],\ldots, e([\gamma'_s])$,
    \item $(\rho^{\trop})|_{\cone(\varphi^*(\Gamma))}:\cone(\varphi^*(\Gamma))\to \cone(\mu_*(\tree_2))$ is an isomorphism taking $[\gamma'_j]$ to $e([\gamma'_j])$,
    \item  The matrix of $(\TP_{\varphi}^{\trop})|_{\cone(\Gamma)}:\cone(\Gamma)\to\cone(\varphi^*(\Gamma))$, with respect to the generators $[\gamma_1],\ldots,[\gamma_r]$ and $[\gamma'_1],\ldots,[\gamma'_s]$ equals $M_2$. 
\end{itemize}

By Proposition \ref{prop:tropicalcorrespondencefrompullback} \eqref{it:right}, 
\begin{align*}
    (\rho^{\trop})_{\cone(\Gamma)}=(\pi_1^{\trop})|_{\cone(\Lambda_{\varphi}(\Gamma))}\circ(\nu_{\varphi}^{\trop})|_{\cone(\Gamma)}
\end{align*}

    By Proposition \ref{prop:tropicalcorrespondencefrompullback} \eqref{it:left}, 
\begin{align*}
    (\rho^{\trop})|_{\cone(\varphi^*(\Gamma))}\circ(\TP_{\varphi}^{\trop})|_{\cone(\Gamma)}=(\pi_2^{\trop})|_{\cone(\Lambda_{\varphi}(\Gamma))}\circ\nu_{\varphi}^{\trop})|_{\cone(\Gamma)}
\end{align*}

So, as maps from $\cone(\tree_1)$ to $\cone(\mu_*(\tree_2))$:
\begin{align}
    &(\rho^{\trop})|_{\cone(\varphi^*(\Gamma))}\circ(\TP_{\varphi}^{\trop})|_{\cone(\Gamma)}\circ((\rho^{\trop})|_{\cone(\Gamma)})^{-1}\label{it:firstline}\\ =&(\pi_2^{\trop})|_{\cone(\Lambda_{\varphi}(\Gamma))}\circ\nu_{\varphi}^{\trop})|_{\cone(\Gamma)}\circ((\rho^{\trop})|_{\cone(\Gamma)})^{-1}\notag\\
    =&(\pi_2^{\trop})|_{\cone(\Lambda_{\varphi}(\Gamma))}\circ\nu_{\varphi}^{\trop})|_{\cone(\Gamma)}\circ((\nu_{\varphi}^{\trop})|_{\cone(\Gamma)})^{-1}\circ((\pi_1^{\trop})|_{\cone(\Lambda_{\varphi}(\Gamma))})^{-1}\notag\\
     =&(\pi_2^{\trop})|_{\cone(\Lambda_{\varphi}(\Gamma))}\circ((\pi_1^{\trop})|_{\cone(\Lambda_{\varphi}(\Gamma))})^{-1}\label{it:lastline}
\end{align}

In terms of the generators $\{e([\gamma_i])\}$ for $\cone(\tree_1)$ and $\{e([\gamma_j])\}$ for $\cone(\mu_*(\tree_2))$, it is clear that \eqref{it:firstline} has matrix $M_2$ and (by definition) \eqref{it:lastline} has matrix $M_1$. The proposition follows.

\end{proof}


\subsection{Weakly fixed cones} A \textit{weakly fixed cone} of $\MSC_{\varphi}^{\trop}$ is a cone $\cone(\Lambda)$ of $\Htrop(\varphi)$ such that $\pi_2^{\trop}(\cone(\Lambda))\subset\pi_1^{\trop}(\cone(\Lambda))$. If $\Lambda=(\tree_1,\tree_2,\ldots)$ then $\cone(\Lambda)$ is weakly fixed if and only if $\mu_*(\tree_2)$ is obtained from $\tree_1$ by edge-contraction (possibly of the empty set of edges). In this case, via the inclusion of cones $\cone(\mu_*(\tree_2))\into\cone(\tree_1)$, the branch $\MSC_{\varphi,\Lambda}^{\trop}$ of $\MSC_{\varphi}^{\trop}$ can be considered to be a self-map of $\cone(\tree_1)$. By the Perron-Frobenius theorem on non-negative matrices: 
\begin{itemize}
    \item The dominant eigenvalue $\lambdadom(\Lambda)$ of $\MSC_{\varphi,\Lambda}^{\trop}$ is real and non-negative, and has an eigenvector in $\cone(\tree_1)$, and
    \item if $\MSC_{\varphi,\Lambda}^{\trop}$ has an eigenvector in $\cone^{\circ}(\tree_1)$, then its eigenvalue is $\lambdadom(\Lambda)$. 
\end{itemize}

\begin{prop}\label{prop:weaklyfixedcone}\hfill
\begin{enumerate}
    \item If $\Gamma$ is a $\varphi$-stable multicurve then $\cone(\Lambda_{\varphi}(\Gamma))$ is a weakly fixed cone of $\MSC_{\varphi}^{\trop}$, and the dominant eigenvalue $\lambdadom(\Lambda_{\varphi}(\Gamma))$ equals the Thurston eigenvalue $\lambdathurst(\Gamma)$ associated to $\Gamma$. \label{it:invconefromgamma}
    \item If $\cone(\Lambda)$ is weakly fixed cone of $\MSC_{\varphi}^{\trop}$ then there exists $\varphi'$ Hurwitz equivalent to $\varphi$ together with a $\varphi'$-stable multicurve $\Gamma$ such that $\Lambda=\Lambda_{\varphi'}(\Gamma)$. \label{it:invgammafrominvcone}
\end{enumerate}
\end{prop}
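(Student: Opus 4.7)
\emph{Proof plan.} For Part (1), I would proceed directly from unpacking Definition \ref{def:combinatorialtypefrommulticurve}. By construction $\tree_1 = \tree(\Gamma)$ and $\mu_*(\tree_2) = \tree(\varphi^*(\Gamma))$, so the $\varphi$-stability hypothesis $\varphi^*(\Gamma) \subset \Gamma$ forces $\tree(\varphi^*(\Gamma))$ to be precisely the edge-contraction of $\tree(\Gamma)$ obtained by collapsing the edges indexed by $\Gamma \setminus \varphi^*(\Gamma)$. Hence $\cone(\mu_*(\tree_2))$ is a face of $\cone(\tree_1)$, giving the weak-fixedness inclusion $\pi_2^{\trop}(\cone(\Lambda_\varphi(\Gamma))) \subset \pi_1^{\trop}(\cone(\Lambda_\varphi(\Gamma)))$. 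For the eigenvalue assertion I would invoke Proposition \ref{prop:localtropicalcorrespondence}: the matrix of the branch $\MSC_{\varphi, \Lambda_\varphi(\Gamma)}^{\trop}$ (now viewed as a self-map of $\cone(\tree_1)$ via the face inclusion) coincides with the matrix of $\TLT_{\varphi, \Gamma}$ (viewed as a self-map of $\R^\Gamma$ via $\varphi^*(\Gamma) \hookrightarrow \Gamma$). Two equal nonnegative matrices share the same Perron-Frobenius dominant eigenvalue, so $\lambdadom(\Lambda_\varphi(\Gamma)) = \lambdathurst(\Gamma)$.

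For Part (2), I would start by applying Lemma \ref{lem:nutropsurjective} to produce a multicurve $\Gamma$ on $S^2 \setminus \bP$ with $\Lambda_\varphi(\Gamma) = \Lambda$. Weak fixedness then forces $\tree(\varphi^*(\Gamma)) = \mu_*(\tree_2)$ to be an edge-contraction of $\tree(\Gamma) = \tree_1$; let $\Gamma_0 \subset \Gamma$ be the sub-multicurve indexed by the uncontracted edges, so that $\tree(\Gamma_0) \cong \tree(\varphi^*(\Gamma))$ as abstract $\bP$-marked trees. The essential subtlety is that $\Gamma_0$ and $\varphi^*(\Gamma)$ are generally \emph{not} equal as multicurves on $(S^2, \bP)$---they merely share a marked dual tree---so $\Gamma$ need not be $\varphi$-stable. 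The idea is to absorb this discrepancy via a carefully chosen homeomorphism, producing a Hurwitz-equivalent $\varphi'$ for which the multicurves do align.

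Concretely, by the change-of-coordinates principle for multicurves on a marked sphere (a standard consequence of the classification of surface homeomorphisms, cf. Farb-Margalit), I would select an orientation-preserving homeomorphism $\psi: S^2 \to S^2$ fixing $\bP$ pointwise with $\psi(\varphi^*(\Gamma)) = \Gamma_0$. Setting $\varphi' := \varphi \circ \psi^{-1}$, the map $\varphi'$ is Hurwitz equivalent to $\varphi$ (Definition \ref{def:hurwitzeq}, with $\psi_1 = \id$ and $\psi_2 = \psi$) and retains post-critical set $\bP$ since $\psi$ fixes $\bP$. A direct computation gives $(\varphi')^*(\Gamma) = \psi(\varphi^*(\Gamma)) = \Gamma_0 \subset \Gamma$, so $\Gamma$ is $\varphi'$-stable. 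Finally I would check that under the natural bijection $(\varphi')^{-1}(\bP) = \psi(\varphi^{-1}(\bP)) \leftrightarrow \varphi^{-1}(\bP)$ supplied by the Hurwitz equivalence, the edge-degrees, leg-degrees, and graph homomorphism of $\Lambda_{\varphi'}(\Gamma)$ match those of $\Lambda$, confirming $\Lambda_{\varphi'}(\Gamma) = \Lambda$.

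The main obstacle I anticipate is justifying the existence of $\psi$ with the right properties: one needs the change-of-coordinates statement strong enough to realize any isomorphism of marked dual trees of multicurves by a homeomorphism fixing $\bP$ pointwise (not merely setwise), because this pointwise hypothesis is what automatically preserves the post-critical set when forming $\varphi'$. Once $\psi$ is in hand, the remainder is bookkeeping to verify that the combinatorial data of $\Lambda$ and $\Lambda_{\varphi'}(\Gamma)$ agree under the identifications built into Hurwitz equivalence.
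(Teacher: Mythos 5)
Your proof of Part~(1) is essentially identical to the paper's: unpack Definition~\ref{def:combinatorialtypefrommulticurve}, observe that $\varphi$-stability forces $\mu_*(\tree_2)=\tree(\varphi^*(\Gamma))$ to be an edge-contraction of $\tree_1=\tree(\Gamma)$, and invoke Proposition~\ref{prop:localtropicalcorrespondence} for the eigenvalue identification.

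For Part~(2) the two arguments share the same core idea --- produce $\Gamma$ via Lemma~\ref{lem:nutropsurjective}, then post-compose $\varphi$ with a homeomorphism fixing $\bP$ pointwise to turn the abstract agreement of marked dual trees into an actual containment of multicurves, using the change-of-coordinates principle for genus-$0$ marked surfaces --- but your write-up is more careful than the paper's on one genuine point. The paper's proof asserts that weak fixedness of $\cone(\Lambda)$ forces $\mu_*(\tree_2)=\tree_1$ and then chooses $\alpha$ with $\alpha_*(\Gamma)=\varphi^*(\Gamma)$, which presupposes $|\Gamma|=|\varphi^*(\Gamma)|$. But weak fixedness only guarantees that $\mu_*(\tree_2)$ is an edge-contraction of $\tree_1$, possibly nontrivial (for instance, if $\varphi^*(\Gamma)=\emptyset$ while $\Gamma\neq\emptyset$, the cone is weakly fixed but $\mu_*(\tree_2)$ is the one-vertex tree). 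Your proposal flags exactly this subtlety --- that $\Gamma_0\subsetneq\Gamma$ can occur --- and correctly weakens the change-of-coordinates requirement to $\psi(\varphi^*(\Gamma))=\Gamma_0\subset\Gamma$, yielding $(\varphi')^*(\Gamma)\subset\Gamma$ rather than equality. This repairs a gap in the paper's argument. You are also right that the final equality $\Lambda_{\varphi'}(\Gamma)=\Lambda$ is straightforward bookkeeping once $\psi$ is in hand: with $\varphi'=\varphi\circ\psi^{-1}$ and $\psi$ fixing $\bP$ pointwise, the marked dual trees, graph homomorphism, and local degrees are all transported verbatim under the identification $\varphi^{-1}(\bP)\leftrightarrow(\varphi')^{-1}(\bP)$ induced by $\psi$.
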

\begin{proof}

   For \eqref{it:invconefromgamma}, write $\Lambda_{\varphi}(\Gamma)=(\tree_1,\tree_2,\ldots)$. If $\Gamma$ is $\varphi$-stable then $\varphi^{*}(\Gamma)\subset(\Gamma)$, which implies that $\mu_*(\tree_2)=\tree(\varphi^{*}(\Gamma))$ is obtained from $\tree_1=\tree(\Gamma)$ by edge-contraction, which in turn implies that $\Lambda_{\varphi}(\Gamma)$ is weakly fixed by $\MSC_{\varphi}^{\trop}$. By Proposition \ref{prop:localtropicalcorrespondence}, the branch $\MSC_{\varphi,\Lambda}^{\trop}$ and the Thurston linear transformation $\TLT_{\varphi,\Gamma}$ have the same matrix representative therefore have the same dominant eigenvalue. So $\lambdadom(\Lambda_{\varphi}(\Gamma))=\lambdathurst(\Gamma)$. 

   Now, suppose that $\cone(\Lambda)$ is a weakly fixed come of $\MSC_{\varphi}^{\trop}$.  Pick any multicurve $\Gamma$ such that $\Lambda=\Lambda_{\varphi}(\Gamma)$.   
   Now, suppose that $\cone(\Lambda)$ is a weakly fixed come of $\MSC_{\varphi}^{\trop}$ --- the fact that $\cone(\Lambda)$ is weakly fixed implied that $\mu_*(\tree_2)=\tree_1$.  Pick any multicurve $\Gamma$ such that $\Lambda=\Lambda_{\varphi}(\Gamma)$; for such $\Gamma$, $\tree(\varphi^{*}(\Gamma))=\tree(\Gamma)$, which means that there is a homeomorphism $\alpha:S^2\to S^2$, fixing $\bP$ pointwise, whose mapping class $[\alpha]\in\Mod(S^2,\bP)$ satisfies $\alpha_{*}(\Gamma)=\varphi^*(\Gamma)$. Set $\varphi'=\varphi\circ\alpha$. It is clear that $\varphi'$ and $\varphi$ are Hurwitz equivalent. Also, $(\varphi')^{*}(\Gamma)= \alpha^*(\varphi^*(\Gamma))=\Gamma$, so $\Gamma$ is $\varphi'$-stable. Finally, by construction, $\Lambda_{\varphi'}(\Gamma)=\Lambda_{\varphi}(\Gamma)=\Lambda$, proving \eqref{it:invgammafrominvcone}.  
\end{proof}

\subsection{Weakly fixed rays}

\begin{Def}\label{def:ray}
  A \textit{ray} of $\M_{0,\bP}^{\trop}$ is a subset of the form $\R_{\ge0}\cdot\Sigma$, where $\Sigma$ is not the one-vertex tree, or, equivalently, $\Sigma$ is not the cone point of $\M_{0,\bP}^{\trop}$). A ray of $\Htrop(\varphi)$ (resp. of $\Htropcomb(\varphi)$) is a subset of the form $\R_{\ge0}\cdot (\Sigma_1,\Sigma_2,\mathbf{F})$, where $\Sigma_1$ is not the one-vertex tree, or, equivalently, $(\Sigma_1,\Sigma_2,\mathbf{F})$ is not the cone point of $\Htrop(\varphi)$ (resp. of $\Htropcomb(\varphi)$).
\end{Def}

The image, under $\pi_1^{\trop}$, of a ray of $\Htrop(\varphi)$ is a ray of $\M_{0,\bP}^{\trop}$. The image, under $\pi_2^{\trop}$, of a ray of $\Htrop(\varphi)$ is either ray of $\M_{0,\bP}^{\trop}$ or is the cone point of $\M_{0,\bP}^{\trop}$.
 
\begin{Def}\label{def:weaklyfixedray}
A \textit{weakly fixed ray} of the tropical moduli space correspondence is a ray of $\Htrop(\varphi)$ whose image under $\pi_2^{\trop}$ is contained in its image under $\pi_1^{\trop}$. A weakly fixed ray $R$ of the tropical moduli space correspondence has associated to it a \textit{scaling factor} $\lambdascal(R)\in \R_{\ge0}$; this is defined to be the factor such that the map $\pi_2^{\trop}\circ(\pi_1^{\trop})^{-1}:\pi_1(R)\to \pi_1(R)$ scales by $\lambdascal(R)$. 
\end{Def}

Suppose that $R=\R_{\ge0}\cdot (\Sigma_1,\Sigma_2,\mathbf{F})$ is a weakly fixed ray of the moduli space correspondence. Set $\Lambda=(\tree_1,\tree_2,\ldots)$ to be the combinatorial type of $(\Sigma_1,\Sigma_2,\mathbf{F})$, so $\cone(\Lambda)$ is the smallest cone containing $R$. Then $\tree_1$ and $\tree_2$ are the underlying $\bP$- and $\varphi^{-1}(\bP)$- marked trees of $\Sigma_1$ and $\Sigma_2$ respectively. The tropical curve $\mu^{\trop}(\Sigma_2)$ is obtained from the tropical curve $\Sigma_1$ by scaling all the edges by $\lambdascal(R)$. Either
\begin{enumerate}
    \item $\lambdascal(R)=0$, and $\mu_{*}(\tree_2)$ is the one-vertex tree, or
    \item $\lambdascal(R)\neq 0$, and $\mu_{*}(\tree_2)=\tree_1$.
\end{enumerate}

In either case, we conclude that $\cone(\Lambda)$ is weakly fixed by $\MSC_{\varphi}^{\trop}$, and that $(\Sigma_1,\Sigma_2,\mathbf{F})$ is a strictly positive eigenvector for $\MSC_{\varphi,\Lambda}^{\trop}$. By the Perron-Frobenius theorem for non-negative matrices, $\lambdascal(R)=\lambdadom(\Lambda)$. 

Conversely, for $\Lambda=(\tree_1,\tree_2,\ldots)$, if $\cone(\Lambda)$ is weakly fixed but not the cone point of $\Htrop(\varphi)$, then we set $\mathrm{Eigenspace}_{+}(\Lambda)\subset\cone(\Lambda)$ to be the intersection with $\cone(\Lambda)$ of the dominant eigenspace of $\MSC_{\varphi,\Lambda}^{\trop}$. Then $\mathrm{Eigenspace}_{+}(\Lambda)\subset\cone(\Lambda)$ is a union of weakly fixed rays, all of whose scaling factors equal $\lambdadom(\Lambda)$. 

If $\Gamma$ is a $\varphi$-stable multicurve then we set  
$$\mathrm{FixedRays}_{\varphi}(\Gamma):=\{R \mbox{ ray }\st R\subset \mathrm{Eigenspace}_{+}(\Lambda(\Gamma))\}.$$

\begin{prop}\label{prop:weaklyfixedray}\hfill
\begin{enumerate}
    \item If $\Gamma$ is a non-empty $\varphi$-stable multicurve then $\mathrm{FixedRays}_{\varphi}(\Gamma)$ is non-empty, and for all $R\in\mathrm{FixedRays}_{\varphi}(\Gamma)$, $\lambdascal(R)=\lambdathurst(\Gamma).$
    
    \item If $R$ is a weakly fixed ray of the moduli space correspondence then there exist $\varphi'$ Hurwitz equivalent to $\varphi$ and $\varphi'$-stable multicurve $\Gamma$ such that $R\in \mathrm{FixedRays}_{\varphi'}(\Gamma)$.
\end{enumerate}
\end{prop}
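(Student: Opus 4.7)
The plan is to deduce both parts directly from Proposition \ref{prop:weaklyfixedcone} combined with the Perron--Frobenius discussion that immediately precedes the statement. The guiding principle is that a weakly fixed ray is, by definition, a ray inside the dominant non-negative eigenspace of the branch associated to some weakly fixed cone, so once one has the correspondence between $\varphi$-stable multicurves and weakly fixed cones, it becomes essentially a matter of extracting dominant eigenvectors.

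For part (1), I would first apply Proposition \ref{prop:weaklyfixedcone}\eqref{it:invconefromgamma} to $\Gamma$, which gives that $\cone(\Lambda_{\varphi}(\Gamma))$ is weakly fixed by $\MSC_{\varphi}^{\trop}$ and that $\lambdadom(\Lambda_{\varphi}(\Gamma)) = \lambdathurst(\Gamma)$. Since $\Gamma$ is non-empty, the tree $\tree(\Gamma) = \tree_1$ has at least one edge (edges of $\tree(\Gamma)$ are in canonical bijection with the elements of $\Gamma$), so $\cone(\Lambda_{\varphi}(\Gamma))$ has positive dimension and in particular is not the cone point of $\Htrop(\varphi)$. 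The branch $\MSC_{\varphi,\Lambda_{\varphi}(\Gamma)}^{\trop}$ is represented, in the natural edge generators of $\cone(\tree_1)$, by a non-negative square matrix; Perron--Frobenius then guarantees a non-zero non-negative eigenvector with eigenvalue $\lambdadom(\Lambda_{\varphi}(\Gamma))$. The corresponding ray lies in $\mathrm{Eigenspace}_{+}(\Lambda_{\varphi}(\Gamma))$, showing $\mathrm{FixedRays}_{\varphi}(\Gamma)$ is non-empty, and by the definition of $\mathrm{Eigenspace}_{+}$ the scaling factor of any such ray equals $\lambdadom(\Lambda_{\varphi}(\Gamma)) = \lambdathurst(\Gamma)$.

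For part (2), let $R$ be a weakly fixed ray of $\MSC_{\varphi}^{\trop}$ and let $\cone(\Lambda)$ be the smallest cone of $\Htrop(\varphi)$ containing $R$, with $\Lambda = (\tree_1, \tree_2, \ldots)$. The analysis in the paragraph preceding the proposition statement shows that $\cone(\Lambda)$ is itself weakly fixed by $\MSC_{\varphi}^{\trop}$ and that $R \subseteq \mathrm{Eigenspace}_{+}(\Lambda)$ with $\lambdascal(R) = \lambdadom(\Lambda)$. Applying Proposition \ref{prop:weaklyfixedcone}\eqref{it:invgammafrominvcone} to $\cone(\Lambda)$ then produces $\varphi'$ Hurwitz equivalent to $\varphi$ together with a $\varphi'$-stable multicurve $\Gamma$ such that $\Lambda = \Lambda_{\varphi'}(\Gamma)$. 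Hence $R \subseteq \mathrm{Eigenspace}_{+}(\Lambda_{\varphi'}(\Gamma))$, which is exactly the statement $R \in \mathrm{FixedRays}_{\varphi'}(\Gamma)$.

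I do not expect a genuine obstacle here; the proposition is essentially a bookkeeping corollary of Proposition \ref{prop:weaklyfixedcone} together with the standard Perron--Frobenius existence theorem. The only point that deserves explicit mention, and the one I would be careful to record, is that the hypothesis ``$\Gamma$ non-empty'' in part (1) is precisely what ensures $\cone(\Lambda_{\varphi}(\Gamma))$ is of positive dimension, so that Perron--Frobenius yields a genuine non-zero eigenvector and thus an actual ray rather than the degenerate cone point.
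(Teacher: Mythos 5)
Your argument is correct and matches the paper's approach exactly: the paper proves the proposition in one line by stating that it follows directly from Proposition \ref{prop:weaklyfixedcone}, and you have simply made explicit the Perron--Frobenius bookkeeping (existence of a non-negative dominant eigenvector, hence a ray in $\mathrm{Eigenspace}_{+}$) and the reduction of a weakly fixed ray to its ambient weakly fixed cone that the paper already records in the paragraphs preceding the statement. Your remark about the role of the non-emptiness hypothesis for $\Gamma$ is the right point to flag, and no genuine gap remains.
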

\begin{proof}
    This follows directly from Proposition \ref{prop:weaklyfixedcone}.
\end{proof}

\begin{cor}\label{cor:obstructionsandrays}\hfill
\begin{enumerate}
    \item If $\Gamma$ is a Thurston obstruction for $\varphi$ then $\mathrm{FixedRays}_{\varphi}(\Gamma)$ is non-empty, and for all $R\in\mathrm{FixedRays}_{\varphi}(\Gamma)$, $\lambdascal(R)\ge 1.$
    
    \item If $R$ is a weakly fixed ray of the moduli space correspondence with $\lambdascal(R)\ge 1$ then there exist $\varphi'$ Hurwitz equivalent to $\varphi$ and a Thurston obstruction $\Gamma$ for $\varphi$ such that $R\in \mathrm{FixedRays}_{\varphi'}(\Gamma)$.
\end{enumerate}
\end{cor}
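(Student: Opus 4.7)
The plan is to deduce this corollary essentially immediately from Proposition \ref{prop:weaklyfixedray}, since being a Thurston obstruction is simply the condition that a multicurve is $\varphi$-stable \emph{and} its Thurston eigenvalue $\lambdathurst(\Gamma)$ is at least $1$. Part (1) of the corollary adds the eigenvalue condition to part (1) of Proposition \ref{prop:weaklyfixedray}, while part (2) adds the scaling condition to part (2).

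For part (1), I would begin by noting that a Thurston obstruction is by definition a non-empty $\varphi$-stable multicurve, and therefore satisfies the hypotheses of Proposition \ref{prop:weaklyfixedray}(1). Applying that proposition directly gives that $\mathrm{FixedRays}_{\varphi}(\Gamma)$ is non-empty and that every $R$ in this set has $\lambdascal(R) = \lambdathurst(\Gamma)$. Since $\Gamma$ is a Thurston obstruction, the condition $\lambdathurst(\Gamma)\ge 1$ then forces $\lambdascal(R)\ge 1$ automatically.

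For part (2), the plan is to run Proposition \ref{prop:weaklyfixedray}(2) to extract some $\varphi'$ Hurwitz equivalent to $\varphi$ and a $\varphi'$-stable multicurve $\Gamma$ with $R\in\mathrm{FixedRays}_{\varphi'}(\Gamma)$. Since $R$ is a ray (not the cone point), the $\varphi'$-stable multicurve $\Gamma$ so produced is automatically non-empty, so part (1) of Proposition \ref{prop:weaklyfixedray} (applied with $\varphi$ replaced by $\varphi'$) gives $\lambdathurst(\Gamma) = \lambdascal(R)$. The hypothesis $\lambdascal(R)\ge 1$ then upgrades $\varphi'$-stability of $\Gamma$ to the statement that $\Gamma$ is a Thurston obstruction for $\varphi'$, as required.

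There is no real obstacle here; the only subtlety I would double-check while writing the proof is that the assertion ``$R$ is a ray'' (i.e.\ not the cone point, per Definition \ref{def:ray}) correctly excludes the degenerate case where Proposition \ref{prop:weaklyfixedray}(2) might otherwise return the empty multicurve, for which $\lambdathurst$ is not defined. With this mild check, both parts of the corollary are essentially a one-line application of the corresponding parts of Proposition \ref{prop:weaklyfixedray} combined with the definitional unpacking of ``Thurston obstruction.''
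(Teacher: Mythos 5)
Your proof is correct and matches the paper exactly: the paper gives no separate proof for Corollary~\ref{cor:obstructionsandrays}, intending it to follow immediately from Proposition~\ref{prop:weaklyfixedray} together with the definitional fact that a Thurston obstruction is precisely a non-empty $\varphi$-stable multicurve with Thurston eigenvalue at least~$1$, and your argument spells out exactly that deduction. The single subtlety you flag --- that ``ray'' (not the cone point) guarantees the multicurve produced by Proposition~\ref{prop:weaklyfixedray}(2) is non-empty --- is indeed the right thing to check, and the implicit use in part~(2) that $\lambdascal(R)$ does not change when $\varphi$ is replaced by a Hurwitz-equivalent $\varphi'$ is justified by $\cH(\varphi') = \cH(\varphi)$ as stated in Section~\ref{sec:modulicorrespondence}.
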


\begin{rem}
    Suppose that $\R_{\ge0}\cdot(\Sigma_1,\Sigma_2,\mathbf{F})$ is a weakly-fixed ray of $\MSC^{\trop}_{\varphi}$ with scaling factor $\lambdascal(R)>0$. The $\bP$-marked tropical curves $\mu^{\trop}(\Sigma_2)$ and $\Sigma_1$ have the same underlying tree, and the length of an edge on $\mu^{\trop}(\Sigma_2)$ is $\lambdascal(R)$ times the length of the corresponding edge on $\Sigma_1$. There is a ``scale by $\lambdascal(R)$" map $h:\Sigma_1\to \mu^{\trop}(\Sigma_2)$. The composite $\mathbf{F}\circ h:\Sigma_1\to\Sigma_1$ is a piecewise linear self-map whose dynamics could be of interest. This idea has been mentioned by Shishikura \cite{ShishikuraTalk} in the context of $\varphi$-stable multicurves. 
\end{rem}

\begin{rem}\label{rem:notdefinedonlink} The link $\Delta_{\bP}$ of $\M_{0,\bP}^{\trop}$ (see Remark \ref{rem:link}) has the structure of a compact polyhedral complex. Given that $\MSC_{\varphi}^{\trop}$ is a piecewise linear multivalued self-map of $\M_{0,\bP}^{\trop}$, we might hope that it descends to a piecewise affine multivalued self-map of $\Delta_{\bP}$. Unfortunately, this is not the case. If we try to define such an induced multivalued self-map $\overline{\MSC}_{\varphi}^{\trop}$ of $\Delta_{\bP}$, we hit a snag as follows. Given a ray $R\subset \M_{0,\bP}^{\trop}$ and a local single-valued branch $\MSC_{\varphi,\Lambda}^{\trop}$ of $\MSC_{\varphi}^{\trop}$ whose domain includes $R$, we would like to be able to specify the image of $R\in\Delta_{\bP}$ under a corresponding single-valued branch $\overline{\MSC}_{\varphi,\Lambda}^{\trop}$ of the desired multivalued self-map $\overline{\MSC}_{\varphi}^{\trop}$ of $\Delta_{\bP}$. The image, under $\MSC_{\varphi, \Lambda}^{\trop}$, of $R$ is either another ray $R'$, or the cone point of $\M_{0,\bP}^{\trop}$. In the former case, we could set $\overline{\MSC}_{\varphi, \Lambda}^{\trop}(R)=R'\in\Delta_{\bP}$. However, in the latter case, there is no point of $\Delta_{\bP}$ corresponding to the cone point of $\M_{0,\bP}^{\trop}$, so there is no good value for $\overline{\MSC}_{\varphi, \Lambda}^{\trop}(R)$. One could possibly think of $\overline{\MSC}_{\varphi}^{\trop}$ as a multivalued ``map with indeterminacy", analogous to a meromorphic map. Note that if $R\subset\M_{0,\bP}^{\trop}$ is a weakly fixed ray of $\MSC_{\varphi}^{\trop}$ with strictly positive scaling factor, then $R\in\Delta_{\bP}$ is a weakly fixed point of $\overline{\MSC}_{\varphi}^{\trop}$. Making this strategy precise, and studying the global dynamical properties of an induced action on the link are currently the subject of investigation by the author and C. Favre.     
\end{rem}

\bibliographystyle{alpha} 
\bibliography{BigRefs.bib}
\end{document}